\documentclass[a4paper,12pt]{amsart}
\usepackage{geometry}
\geometry{a4paper,tmargin=2.6cm,bmargin=2.6cm,lmargin=2.6cm,rmargin=2.6cm,headheight=1cm,headsep=1cm,footskip=1cm}
\usepackage[utf8]{inputenc}
\usepackage[british]{babel}

\usepackage{amsthm,amssymb,amsmath,graphicx,mathtools, amsaddr}
\usepackage[shortlabels, inline]{enumitem} 
\usepackage[abbrev]{amsrefs}
\usepackage{hyperref}

\allowdisplaybreaks
\newtheorem{theorem}{Theorem}
\theoremstyle{plain}
\newtheorem{claim}[theorem]{Claim}
\newtheorem{conjecture}[theorem]{Conjecture}
\newtheorem{corollary}[theorem]{Corollary}
\newtheorem{lemma}[theorem]{Lemma}
\numberwithin{equation}{section}
\numberwithin{theorem}{section}

\usepackage[usenames,dvipsnames]{color}

\def\schrom{\chi_{\textnormal{s}}}
\def\dfn{:=}
\def\eps{\varepsilon}
\DeclareMathOperator{\expectation}{\mathbf{E}}
\DeclareMathOperator{\prob}{Pr}

\newenvironment{proofclaim}[1][Proof of the claim]{\begin{proof}[#1]}{\end{proof}}

\setenumerate[1]{label=\textnormal{(\roman*)}}
\setenumerate[2]{label=\textnormal{(\alph*)}}

\title{An asymptotic bound for the strong chromatic number} 
\author{Allan Lo and Nicolás Sanhueza-Matamala}
\stepcounter{g@author}
\address[Allan Lo, Nicolás Sanhueza-Matamala]{School of Mathematics,\\ University of Birmingham,\\ Birmingham B15 2TT,\\ United Kingdom}
\email{s.a.lo@bham.ac.uk, NIS564@bham.ac.uk}
\thanks{The research leading to these results was partially supported by EPSRC, grant no. EP/P002420/1 (A.~Lo) and the Becas Chile scholarship scheme from CONICYT (N.~Sanhueza-Matamala).}

\begin{document}

\begin{abstract}
	The strong chromatic number~$\schrom(G)$ of a graph~$G$ on~$n$ vertices is the least number~$r$ with the following property: after adding~$r \lceil n/r \rceil - n$ isolated vertices to~$G$ and taking the union with any collection of spanning disjoint copies of~$K_r$ in the same vertex set, the resulting graph has a proper vertex-colouring with~$r$ colours.
	We show that for every~$c > 0$ and every graph~$G$ on~$n$ vertices with $\Delta(G) \ge cn$, $\schrom(G) \leq (2 + o(1)) \Delta(G)$, which is asymptotically best possible.
	
	2010 \emph{Mathematics subject classification}: 05C15, 05C70, 05C35.
\end{abstract}

\maketitle
	
\section{Introduction}

Let~$r$ be a positive integer.
Let~$G$ be a graph on~$n$ vertices, where~$r$ divides~$n$.
We say that~$G$ is \emph{strongly $r$-colourable} if it can be properly $r$-coloured after taking the union of~$G$ with any collection of spanning disjoint copies of~$K_r$ in the same vertex set.
Equivalently,~$G$ is strongly $r$-colourable if for every partition~$\{ V_1, \dotsc, V_k \}$ of~$V(G)$ with classes of size~$r$, there is a proper vertex colouring of~$G$ using~$r$ colours with the additional property that every~$V_i$ receives all of the~$r$ colours.
If~$r$ does not divide~$n$, we say that~$G$ is \emph{strongly $r$-colourable} if the graph obtained by adding~$r \lceil n / r \rceil - n$ isolated vertices to~$G$ is $r$-strongly colourable.
The \emph{strong chromatic number~$\schrom(G)$} of~$G$ is the minimum~$r$ such that~$G$ is $r$-strongly colourable.
This notion was introduced independently by Alon~\cite{Alon1988} and Fellows~\cite{Fellows1990}.

One of the first problems related to the strong chromatic number was the cycles-plus-triangles problem of Erd\H{o}s (see~\cite{FleischnerStiebitz1997}), who asked (in an equivalent form) if~$\schrom(C_{3m}) \leq 3$, where~$C_{3m}$ is the cycle on~$3m$ vertices.
This was answered affirmatively by Fleischner and Stiebitz~\cite{FleischnerStiebitz1992} and independently by Sachs~\cite{Sachs1993}.

It is an open problem to find the best bound on~$\schrom(G)$ in terms of~$\Delta(G)$.
Alon~\cite{Alon1992} proved that~$\schrom(G) \leq c \Delta(G)$ for some constant~$c > 0$.
Haxell~\cite{Haxell2004} showed that~$c = 3$ suffices and later~\cite{Haxell2008} that~$c \leq 11/4 + \eps$ suffices given~$\Delta(G)$ is large enough with respect to~$\eps$.
On the other hand, there are examples showing~$c \ge 2$ is necessary (see, e.g.,~\cite{AxenovichMartin2006}).
It is conjectured (first explicitly stated by Aharoni, Berger and Ziv~\cite[Conjecture 5.4]{AharoniBergerZiv2007}) that this lower bound is also tight.

\begin{conjecture} \label{conjecture:strongchromaticnumber}
	For every graph~$G$, $\schrom(G) \leq 2 \Delta(G)$.
\end{conjecture}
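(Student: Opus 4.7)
The plan is to attempt a proof along the lines of Haxell's topological approach to independent transversals, sharpened to match the tight constant 2. Recall that $G$ is strongly $r$-colourable if and only if for every partition $\{V_1, \dotsc, V_k\}$ of $V(G)$ into parts of size $r$, the vertex set can be partitioned into $r$ \emph{independent transversals}, i.e.\ independent sets of $G$ meeting each $V_i$ in exactly one vertex. Haxell's theorem guarantees a single such transversal whenever $r \ge 2\Delta(G)$; the content of Conjecture~\ref{conjecture:strongchromaticnumber} is that $r$ disjoint transversals can be extracted simultaneously at the same threshold.

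First I would try to lift Haxell's argument to a list-colouring framework. Assign to each vertex $v$ the list $L(v) = \{1, \dotsc, r\}$ with $r=2\Delta(G)$, and seek a proper colouring that is simultaneously transversal (each $V_i$ sees every colour). A naive iteration that extracts one independent transversal at a time and deletes it fails, because after deletions the parts shrink while the maximum degree of the residual graph may not decrease, pushing us below the Haxell threshold; this is the source of the factor $3$ in Haxell's direct argument. Instead I would attempt to produce all $r$ transversals in one shot via a \emph{rainbow} version of the topological connectivity of the independence complex, in the spirit of Aharoni, Berger and Ziv, so that a single topological Hall-type criterion delivers an entire partition into independent transversals rather than one transversal at a time.

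The main technical ingredient I would try to develop is a sharpened connectivity estimate for the independence complex $\mathcal{I}(G)$ of a graph with bounded maximum degree, matching the extremal bipartite construction that proves the factor $2$ is necessary. Specifically, I would try to show a Meshulam-type bound of the form $\eta(\mathcal{I}(G[V_1 \cup \dotsb \cup V_k])) \ge k$ whenever $|V_i| \ge 2\Delta(G)$ and the parts are suitably \emph{balanced}; then a rainbow version of the topological Hall theorem would produce the desired $r$ disjoint transversals from any system of lists. A complementary angle is to reduce the general conjecture to a bipartite sub-case via an alternating-path / parity argument, since the known extremal examples are bipartite and blow-ups thereof.

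The main obstacle will be precisely the factor $2$. All current topological proofs lose a factor somewhere, typically yielding a connectivity bound of order $n/(3\Delta)$ rather than $n/(2\Delta)$, which corresponds to Haxell's $3\Delta$; the same barrier appears in list edge-colouring and in rainbow matching problems in Latin squares. Crossing it seems to require either (i) a refined analysis of the independence complex that exploits the extra slack in the transversal structure (the whole partition must fit together, which is more rigid than existence of one transversal), or (ii) an entirely non-topological attack --- for instance, a Combinatorial Nullstellensatz argument tailored to the extremal bipartite structure, or an entropy-compression / Moser--Tardos scheme whose constants can be pushed down to $2$. The hard step, and where I would expect most of the work to lie, is establishing such a sharpened connectivity or algebraic identity at the exact threshold $r = 2\Delta(G)$.
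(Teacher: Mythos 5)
This statement is an open conjecture: the paper does not prove it, and neither do you. What the paper actually establishes is Theorem~\ref{theorem:strongcolouring}/\ref{theorem:main}, the weaker asymptotic bound $\schrom(G) \le (2+\eps)\Delta(G)$ under the extra hypothesis $\Delta(G) \ge cn$, and it does so by an entirely different, non-topological route: an absorbing set built from Haxell's theorem via supersaturation (Lemma~\ref{lemma:absorbing}), combined with the fractional relaxation of Aharoni--Berger--Ziv (Corollary~\ref{corollary:fractionalmatching}) and Pippenger's semi-random nibble (Theorem~\ref{theorem:nibble}) to cover almost all vertices by disjoint independent transversals. That machinery inherently loses a $(1+o(1))$ factor and needs the parts to be large relative to $\Delta$, which is exactly why the exact threshold $2\Delta(G)$ remains out of reach.

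Your proposal, as written, contains no proof: every route you sketch bottoms out at an unestablished key step that is essentially equivalent to the conjecture itself. The ``sharpened Meshulam-type bound'' $\eta(\mathcal{I}(G[V_1\cup\dotsb\cup V_k])) \ge k$ at part size $2\Delta(G)$, together with a rainbow topological Hall theorem strong enough to output all $r$ disjoint transversals simultaneously, is precisely the barrier you yourself identify in the final paragraph; no argument is given for either ingredient, and the known connectivity bounds only deliver Haxell's $3\Delta$ (or $11/4+\eps$ with much more work). The proposed reduction to the bipartite case and the algebraic/entropy-compression alternatives are likewise named but not carried out. So there is a genuine gap --- indeed the entire content of the statement is missing --- and you should be aware that the paper you are comparing against does not close it either; it sidesteps it by proving an asymptotic version for dense-degree graphs.
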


Conjecture~\ref{conjecture:strongchromaticnumber} is known to be true for graphs~$G$ on~$n$ vertices with~$\Delta(G) \ge n/6$, proven by Axenovich and Martin~\cite{AxenovichMartin2006} and independently by Johansson, Johansson and Markström~\cite{JohanssonJohanssonMarkstrom2010}.

A fractional version of Conjecture~\ref{conjecture:strongchromaticnumber} was proven by Aharoni, Berger and Ziv~\cite{AharoniBergerZiv2007}.
We say that a graph on~$n$ vertices is \emph{fractionally strongly $r$-colourable} if after adding $r \lceil n/r \rceil - n$ isolated vertices and taking the union with any collection of spanning copies of~$K_r$ in the same vertex set, the graph is fractionally $r$-colourable.

\begin{theorem}[Aharoni, Berger and Ziv {\cite{AharoniBergerZiv2007}}] \label{theorem:fractionalcolouring}
	Every graph~$G$ is fractionally strongly $r$-colourable, for every~$r \ge 2 \Delta(G)$.
\end{theorem}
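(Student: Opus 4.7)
The plan is to reinterpret the fractional colouring condition as a statement about probability distributions on independent transversals, and then to produce such a distribution via a fractional Hall-type argument.

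Let $G'$ be the graph obtained from $G$ by adding $r\lceil n/r\rceil-n$ isolated vertices and taking the union with a given spanning collection of disjoint copies of $K_r$, with classes $V_1,\ldots,V_k$ of size $r$. Because each $V_i$ is a clique in $G'$, its independent sets are precisely the partial independent transversals of $(V_1,\ldots,V_k)$ in $G$. By the definition of the fractional chromatic number, $\chi_f(G')\le r$ holds iff there exist nonnegative weights $w_S$ on such partial transversals summing to $r$ with $\sum_{S\ni v}w_S\ge 1$ for every $v$. Scaling by $1/r$ yields a probability measure $\mu$ on partial transversals with $\mu(\{S:v\in S\})\ge 1/r$ for every $v$. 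Summing this over $v\in V_i$ gives $\sum_{v\in V_i}\mu(\{S:v\in S\})\ge 1$; but this sum equals $\mu(\{S:S\cap V_i\ne\emptyset\})\le 1$, so all inequalities are equalities. In particular $\mu$ is supported on \emph{full} independent transversals and has uniform marginal $1/r$ on each vertex.

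It therefore suffices to produce such a $\mu$ whenever the partition has parts of size $r\ge 2\Delta(G)$. By LP duality this is equivalent to a fractional Hall-type inequality: for every nonnegative weighting $y\colon V(G)\to\mathbb{R}_{\ge 0}$ with $\sum_{v\in V_i}y_v=1$ for each~$i$, some independent transversal $T$ satisfies $\sum_{v\in T}y_v\ge k/r$. I would establish the dual inequality by the topological method in the spirit of Aharoni--Haxell: when $|V_i|\ge 2\Delta$ the independence complex of $G[\bigcup_{i\in I}V_i]$ is sufficiently highly connected for every $I\subseteq[k]$ (quantitatively via a Meshulam-type bound), and a nerve/Sperner-style argument turns this connectivity into the required fractional rainbow transversal with uniform marginals.

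The main obstacle is passing from Haxell's integral $2\Delta$ theorem, which only delivers a single independent transversal, to an entire probability measure with prescribed uniform marginals. Overcoming this requires a genuinely fractional ingredient: topologically, one must upgrade mere non-emptiness of the independence complex to a quantitative connectivity estimate that supports a uniform fractional matching. The threshold $r\ge 2\Delta(G)$ is precisely the regime in which such connectivity is available, so once the right fractional Hall theorem is in hand, the degree hypothesis is consumed in exactly one place.
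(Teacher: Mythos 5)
The paper does not prove this theorem: it is cited verbatim from Aharoni, Berger and Ziv~\cite{AharoniBergerZiv2007} and used as a black box, so there is no ``paper's own proof'' to compare against. That said, your first paragraph is a clean and correct reduction: the fractional strong colourability condition, together with the clique structure of each~$V_i$, forces any optimal fractional colouring to be a probability measure on full independent transversals with every vertex marginal exactly~$1/r$. This is precisely the observation the authors record in the discussion preceding Corollary~\ref{corollary:fractionalmatching}, where they derive the fractional matching form \emph{from} the theorem; what you have rederived is that consequence, not the theorem itself.

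Where the proof must actually happen, your sketch thins out. The LP-dual reformulation is asserted rather than verified, and restricting to weightings~$y$ that are balanced across the parts ($\sum_{v \in V_i} y_v = 1$ for each~$i$) is a genuine restriction of the dual cone; you would need to argue that this loses nothing, which is not immediate since the quantity $\sum_{v\in T} y_v$ does not decouple across parts once $T$ is constrained to be independent. More seriously, the ``fractional Hall theorem'' you plan to invoke---the step that upgrades Haxell's existence of a single independent transversal to a fractional perfect matching with uniform marginals---is the entire content of Aharoni--Berger--Ziv, and you name it (``Meshulam-type bound'', ``nerve/Sperner-style argument'') without stating what it says or how the connectivity of the independence complexes is converted into the required system of weights. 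Your final paragraph correctly identifies this as the main obstacle, but identifying the gap is not the same as bridging it; as written this is a plan aimed in the right direction (topological connectivity is indeed the engine of the cited proof) rather than a proof.
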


We prove that Conjecture~\ref{conjecture:strongchromaticnumber} is asymptotically true if~$\Delta(G)$ is linear in~$|V(G)|$.

\begin{theorem} \label{theorem:strongcolouring}
	For all~$c, \eps > 0$, there exists~$n_0 = n_0(c, \eps)$ such that the following holds: if~$G$ is a graph on~$n \ge n_0$ vertices with~$\Delta(G) \ge c n$, then~$\schrom(G) \leq (2 + \eps) \Delta(G)$.
\end{theorem}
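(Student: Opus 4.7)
Fix $r = \lceil (2+\eps)\Delta(G)\rceil$ and an arbitrary partition $\mathcal{V}=\{V_1,\ldots,V_k\}$ of $V(G)$ (padded with isolated vertices) into parts of size $r$. The task is to find a perfect matching in the $k$-uniform $k$-partite auxiliary hypergraph $\mathcal{H}$ on $V(G)$ whose edges are the independent transversals of $\mathcal{V}$. My plan is to deploy the absorbing method, with Theorem~\ref{theorem:fractionalcolouring} feeding into the "cover almost everything" step.

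First, I would build an \emph{absorbing set} $A\subseteq V(G)$ with $|A\cap V_i|=\alpha r$ for some $\alpha=\alpha(c,\eps)>0$, together with a fixed matching $M_A$ of $\mathcal{H}[A]$ covering $A$, designed so that for every \emph{balanced leftover} $L\subseteq V(G)\setminus A$ with $|L\cap V_i|=t$ constant over $i$ and $t\le\beta r\ll\alpha r$, the sub-hypergraph $\mathcal{H}[A\cup L]$ still contains a perfect matching. The natural construction is to pack many pairwise disjoint local \emph{swap gadgets}: short configurations of edges of $\mathcal{H}$ that let us exchange any prescribed outside vertex for a suitably chosen vertex of $A$. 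The density hypothesis $\Delta(G)\ge cn$ together with the slack $r-2\Delta(G)\ge\eps\Delta(G)$ guarantees gadgets of each shape in abundance around every vertex, and a random choice of $A$ analysed via the Lovász Local Lemma should furnish enough copies to resolve any small balanced leftover.

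Next, in the residual graph $G-A$ I would apply a Pippenger--Spencer/Kahn-type semirandom nibble to find a near-perfect matching of $\mathcal{H}$ restricted to $V(G)\setminus A$. Theorem~\ref{theorem:fractionalcolouring} supplies a fractional perfect matching of $\mathcal{H}$ (the hypothesis $r\ge 2\Delta(G)$ is met even with room to spare); a random restriction of the fractional weights yields the near-regularity and small-codegree conditions required by the matching theorem, the $\eps$-slack in $r$ translating into the $o(1)$ relative slack needed for concentration. The resulting partial matching covers all of $V(G)\setminus A$ except for a balanced leftover $L$ of size $o(n)$, which by construction of $A$ is absorbed: $\mathcal{H}[A\cup L]$ contains a perfect matching, and combining it with the nibble matching produces a perfect matching of $\mathcal{H}$, i.e.\ a strong $r$-colouring of $G$.

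The main technical obstacle is the absorbing lemma. The independent-transversal constraint is substantially more rigid than, say, tiling with copies of a fixed clique, so the swap gadgets must be carefully designed and then shown to be absorbing for \emph{arbitrary} balanced leftovers rather than merely for a typical one. Identifying the right notion of gadget, showing they occur abundantly under the assumption $\Delta(G)\ge cn$, and packing enough of them into a single random $A$ together with a compatible decomposition $M_A$ is where I expect the bulk of the work to lie; the nibble step, by contrast, should follow a by-now standard recipe once the fractional input from Theorem~\ref{theorem:fractionalcolouring} is in place, and the calibration of $\alpha\gg\beta\gg 1/n$ is a routine choice of constants.
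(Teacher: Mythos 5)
Your overall plan --- absorption followed by a Pippenger-type nibble fed by the fractional matching of Theorem~\ref{theorem:fractionalcolouring} --- is exactly the strategy the paper uses, but the nibble step as you describe it has a real gap. A ``random restriction of the fractional weights'' does not, by itself, deliver the small-codegree hypothesis: a fractional perfect matching $w$ may route a non-negligible fraction of the total weight through a vertex $u$ onto transversals that also pass through a fixed second vertex $v$, and any random sparsification that boosts degrees to some $D$ inherits $\expectation[\deg(u,v)]\approx D\sum_{T\ni u,v}w(T)$, which need not be $o(D)$. The $\eps$-slack controls the \emph{existence} of $w$, not its spread. The paper's fix (Claim~\ref{claim:tediouscalculations} inside the proof of Lemma~\ref{lemma:almostperfectcolouring}) is to choose about $r^{1+\eta_2}$ tiny random balanced subsets $R(j)$, compute a fractional matching \emph{locally} on each $G[R(j)]$ via Corollary~\ref{corollary:fractionalmatching}, and include each transversal lying in some $R(j)$ as a hyperedge with probability its local weight; the block design conditions (each vertex in about $r^{\eta_1+\eta_2}$ blocks, each legal pair in at most two, each legal triple in at most one --- the last needs $k\ge 3$) are what force near-regular degrees and $O(1)$ codegrees. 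Some such localization of the fractional matching is necessary; without it Theorem~\ref{theorem:nibble} cannot be invoked.

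On the absorbing side you correctly flag the gadget construction as the crux, but you do not name the tool that makes it go through: Haxell's independent transversal theorem (Lemma~\ref{lemma:haxellcondition}). It yields (Lemma~\ref{lemma:twotransversals}, then supersaturation in Corollary~\ref{cor:supersaturation}) that for any two vertices $v,v'$ in a class there are $\Omega(r^{k-1})$ independent transversals $T$ of the remaining classes with both $T\cup\{v\}$ and $T\cup\{v'\}$ independent --- exactly the swap gadget, built from the $\eps$-slack in $r$ rather than from the density bound $\Delta(G)\ge cn$. Given this, a simple random sparsification plus a Markov bound on overlapping pairs packs enough disjoint gadgets into one balanced $A$; the Lov\'asz Local Lemma is not the natural instrument here, since the events to control (``for every balanced leftover of size at most $\beta n$\,\dots'') are global rather than local. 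Finally, $\Delta(G)\ge cn$ enters precisely to bound the number of classes $k\le 1/\bigl((2+\eps)c\bigr)$, and the paper additionally reduces to $k\ge 3$ by quoting the known bound $\schrom(G)\le 2\Delta(G)$ when $\Delta(G)\ge n/6$; your sketch should make both reductions explicit.
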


Given a graph~$G$ and a partition~$\mathcal{P} = \{ V_1, \dotsc, V_k \}$ of~$V(G)$, we make the following definitions.
A subset~$S \subseteq V(G)$ is \emph{$\mathcal{P}$-legal} if~$|S \cap V_i| \leq 1$ for every~$i \in [k]$.
A \emph{transversal of~$\mathcal{P}$} is a $\mathcal{P}$-legal set of cardinality~$|\mathcal{P}|$.
An \emph{independent transversal of~$\mathcal{P}$} is a transversal of~$\mathcal{P}$ which is also an independent set in~$G$.
We will write \emph{transversal} and \emph{independent transversal} if~$G$ and~$\mathcal{P}$ are clear from the context.
To prove Theorem~\ref{theorem:strongcolouring} it suffices to show that given any partition~$\mathcal{P}$ of~$V(G)$ with classes of size~$r \ge (2 + \eps) \Delta(G)$,~$V(G)$ can be partitioned into independent transversals of~$\mathcal{P}$.
Moreover, since Conjecture~\ref{conjecture:strongchromaticnumber} is known to be true for graphs on~$n$ vertices with~$\Delta(G) \ge n/6$, we might restrict ourselves to study graphs with~$\Delta(G) \leq n/6$, and in such graphs any partition~$\mathcal{P}$ of~$V(G)$ with parts of size~$r = (2 + \eps) \Delta(G) < 3 \Delta(G)$ will have at least~$3$ classes.
Thus Theorem~\ref{theorem:strongcolouring} is implied by the following theorem.

\begin{theorem} \label{theorem:main}
	For all integers~$k \ge 3$ and~$\eps > 0$, there exists~$r_0 = r_0(k, \eps)$ such that the following holds for all~$r \ge r_0$:
	if~$G$ is a graph and~$\mathcal{P}$ is a partition of~$V(G)$ with~$k$ classes of size~$r \ge (2 + \eps) \Delta(G)$, then there exists a partition of~$V(G)$ into independent transversals of~$\mathcal{P}$.
\end{theorem}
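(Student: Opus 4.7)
My overall strategy is the \emph{absorption method}. Observe first that the hypothesis $\Delta(G) \ge cn$ forces $k \le 1/((2+\varepsilon)c)$, so we may treat $k$ as a constant depending only on $c$ and $\varepsilon$. Fix small positive parameters $\gamma \ll \beta \ll \varepsilon, c, 1/k$. The aim is to find a collection $\mathcal{A}$ of pairwise-disjoint independent transversals of $\mathcal{P}$ covering exactly $\beta r$ vertices of each class, with the property that $V(\mathcal{A}) \cup R$ can be re-partitioned into independent transversals for any $R \subseteq V(G) \setminus V(\mathcal{A})$ with $|R \cap V_i| \le \gamma r$ (and $|R \cap V_i|$ equal across $i$) for all $i$. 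Given such an absorbing family, it suffices to cover all but at most $\gamma r$ vertices of each class in $V(G) \setminus V(\mathcal{A})$ by disjoint independent transversals.

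To build $\mathcal{A}$, I would introduce a constant-size gadget, a $v$-\emph{absorber}, consisting of a small family of disjoint independent transversals on a set $U$ such that both $U$ and $U \cup \{v\}$ admit a partition into independent transversals (the latter having one more transversal, which reaches into the class containing $v$). Using Haxell's theorem applied to suitable subgraphs, and exploiting the strict slack $r \ge (2+\varepsilon)\Delta(G)$, one shows that every vertex $v$ has $\Omega(r^t)$ many absorbers for some constant $t$. Then a random-greedy selection (including each absorber independently with a small probability, deleting overlaps, and applying the Lovász Local Lemma to guarantee that $\Omega(\beta r)$ absorbers survive around every vertex) yields the required $\mathcal{A}$. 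Absorption of a balanced residual then proceeds one vertex at a time by swapping with a dedicated absorber.

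To cover the rest up to the small residual, I would invoke Theorem~\ref{theorem:fractionalcolouring} on the graph induced by $V(G) \setminus V(\mathcal{A})$ (which still satisfies the requisite degree condition) to obtain a fractional partition into independent transversals of the restricted partition. A nibble-type theorem for hypergraph matchings of Pippenger--Spencer type, applied to the auxiliary hypergraph whose vertices are $V(G) \setminus V(\mathcal{A})$ and whose edges are the independent transversals, then extracts an integer near-perfect matching, provided the hypergraph is almost regular with small pairwise codegrees; one verifies both by counting independent transversals through a fixed vertex (or pair), again via Haxell plus the slack. The leftover is small and, after a short cleanup to ensure balance across the classes, can be absorbed into $\mathcal{A}$ to complete the desired partition.

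The main obstacle will be the near-perfect coverage step. A naive greedy iteration of Haxell's theorem yields only $\varepsilon\Delta(G)$ independent transversals before the inequality $r' \ge 2\Delta'$ breaks down, whereas we need almost $r = (2+\varepsilon)\Delta(G)$ of them. Sidestepping this via the fractional partition and a nibble demands codegree control for the independent-transversal hypergraph, which is nontrivial and where most of the technical work is likely to sit. A secondary subtlety is ensuring the residual produced by the nibble is not only small but also \emph{balanced} across the classes of $\mathcal{P}$, since absorption fundamentally needs $|R \cap V_i|$ to be equal (or trivially correctable) for all $i$; this may require designing the absorber to swallow small class-imbalances in addition to balanced residuals.
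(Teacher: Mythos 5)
Your overall blueprint (absorption plus a nibble on a hypergraph of independent transversals, with the fractional theorem of Aharoni--Berger--Ziv and Haxell's theorem doing the heavy lifting) matches the paper's, but the covering step as you sketch it has a real gap.

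You propose to apply a Pippenger--Spencer-type nibble directly to the hypergraph $H$ whose edges are \emph{all} independent transversals of the restricted partition, and you say almost-regularity and small codegree can be verified ``by counting independent transversals through a fixed vertex (or pair), again via Haxell plus the slack.'' This cannot work: Haxell plus supersaturation only gives a \emph{lower} bound of the form $\eta r^{k-1}$ on the number of independent transversals through a fixed vertex, while a nearly isolated vertex can lie in $\Theta(r^{k-1})$ of them and a vertex of degree close to $\Delta(G)$ in far fewer. So $H$ is genuinely irregular, and Pippenger's theorem (which requires $\deg(v)=(1\pm\tau)D$ for a single $D$) does not apply; the slack in $r\ge(2+\varepsilon)\Delta(G)$ does not repair this. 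You flag ``codegree control'' as the hard part, but codegree is actually the easy part (it is $O(r^{k-2})$, so $o(D)$ for any sensible $D$); the real obstruction is vertex-degree concentration, and your sketch gives no mechanism for it. The paper's fix is a specific extra idea you are missing: choose $r^{1+\eta_2}$ random balanced $mk$-subsets $R(j)$ of $V(G)$ so that every vertex lies in a concentrated number of them, every pair in $O(1)$, and every triple in at most one; apply the fractional theorem \emph{inside each} $G[R(j)]$ to get local weights summing to $1$ at each vertex; then build a \emph{random sub-hypergraph} of $H$ by keeping each transversal $T\subseteq R(j_T)$ independently with probability $w^{j_T}(T)$. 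By construction the expected degree of $v$ equals $|\{j:v\in R(j)\}|$, which is concentrated by Chernoff, and pair-degrees are $O(1)$ in expectation; this random sub-hypergraph is the object to which Pippenger's theorem is applied. Without this (or an equivalent regularization device, e.g.\ a weighted nibble with the fractional matching serving directly as the edge weighting), the covering step does not go through.

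Two smaller notes. First, your opening line invokes a hypothesis $\Delta(G)\ge cn$ that is not present in the theorem you are proving --- there $k$ is simply given; this is harmless since you immediately treat $k$ as a constant, which is all that is needed. Second, your worry about rebalancing the residual is unnecessary: since every independent transversal is by definition $\mathcal{P}$-balanced, the leftover after removing a partial strong colouring from a balanced vertex set is automatically balanced, which is exactly why the paper's absorbing lemma only needs to handle balanced $S$. Your absorber design (single-vertex absorbers with a random-greedy/LLL selection) differs in detail from the paper's (absorbers for balanced $k$-sets, selected by Chernoff plus Markov and deletion of intersecting pairs), but both are viable; that part of your plan is fine.
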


By considering the complement graph, Theorem~\ref{theorem:main} easily yields the following corollary.
A \emph{perfect $K_k$-tiling} of a graph~$G$ is a spanning subgraph of~$G$ with components which are complete graphs on~$k$ vertices.

\begin{corollary}
	For all integers~$k \ge 3$ and~$\eps > 0$, there exists~$n_0 = n_0(k, \eps)$ such that the following holds:
	if~$n \ge n_0$ and~$G$ is a $k$-partite graph with classes of size~$n$ and~$\delta(G) \ge (k - 3/2 + \eps)n$, then~$G$ has a perfect $K_k$-tiling.
\end{corollary}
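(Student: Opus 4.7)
My plan is to derive the corollary directly from Theorem~\ref{theorem:main} by passing to a suitable ``complement'' within the $k$-partite structure. Writing $V_1, \dotsc, V_k$ for the parts of $G$ and setting $\mathcal{P} = \{V_1, \dotsc, V_k\}$, I would define an auxiliary graph $H$ on $V(G)$ by declaring $uv \in E(H)$ whenever $u$ and $v$ lie in distinct parts of $\mathcal{P}$ and $uv \notin E(G)$. Any independent transversal of $\mathcal{P}$ in $H$ then consists of one vertex per class, pairwise non-adjacent in $H$, hence pairwise adjacent in $G$; so it induces a $K_k$ in $G$. Consequently, a partition of $V(H) = V(G)$ into $n$ independent transversals of $\mathcal{P}$ in $H$ is precisely a perfect $K_k$-tiling of $G$.

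The key estimate to establish is $\Delta(H) \leq (1/2 - \eps)n$. Indeed, for $v \in V_i$ the $H$-neighbours of $v$ lie in $V(G) \setminus V_i$ (a set of size $(k-1)n$) and are exactly the $G$-non-neighbours of $v$ there, so
\[
	\deg_H(v) \leq (k-1)n - \deg_G(v) \leq (k-1)n - (k - 3/2 + \eps)n = (1/2 - \eps)n.
\]
If $\eps \geq 1/2$, then $G$ is complete $k$-partite and the conclusion is immediate, so I may assume $\eps < 1/2$.

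Finally, I would pick $\eps' > 0$ small enough that $(2 + \eps')(1/2 - \eps) \leq 1$; the choice $\eps' = \eps$ works since $(2+\eps)(1/2-\eps) = 1 - 3\eps/2 - \eps^2 < 1$. Setting $n_0 \dfn r_0(k, \eps')$ via Theorem~\ref{theorem:main}, for every $n \geq n_0$ each class of $\mathcal{P}$ has size $r = n \geq (2+\eps')\Delta(H)$, so Theorem~\ref{theorem:main} applied to $H$ with partition $\mathcal{P}$ delivers the required partition of $V(G)$ into independent transversals, equivalently the desired perfect $K_k$-tiling of $G$. The argument is a one-step reduction, and the only technical point is verifying the numerical inequality linking $\eps$ and $\eps'$; there is no genuine obstacle beyond invoking Theorem~\ref{theorem:main}.
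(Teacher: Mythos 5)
Your proposal is correct and is precisely the argument the paper has in mind when it says the corollary follows ``by considering the complement graph'': you form the bipartite-complement $H$ inside the $k$-partition (correctly avoiding the full complement, whose maximum degree would be too large), verify $\Delta(H) \le (1/2 - \eps)n$ so that $n \ge (2+\eps)\Delta(H)$, and apply Theorem~\ref{theorem:main} to extract a partition into independent transversals, i.e.\ a perfect $K_k$-tiling of $G$. The edge-case handling for $\eps \ge 1/2$ and the choice $\eps' = \eps$ are both fine, so this is a complete proof taking the same route as the paper.
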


To prove Theorem~\ref{theorem:main} we use the absorption method, which was first introduced in a systematic way by R\"odl, Ruci\'nski and Szemer\'edi~\cite{RodlRucinskiSzemeredi2006} (although similar ideas were used previously, e.g. by Krivelevich \cite{Krivelevich1997}).
In Section~\ref{section:absorption} we find a small absorbing set, that is, given a partition~$\mathcal{P}$ we find a small vertex set~$A \subseteq V(G)$ which is balanced (i.e. it intersects each class of~$\mathcal{P}$ in the same number of vertices) with the property that for every small balanced set~$S \subseteq V(G)$,~$A \cup S$ can be partitioned into independent transversals.
Thus the problem of finding a partition into independent transversals is reduced to the problem of finding a collection of disjoint independent transversals covering almost all vertices, which we find in Section~\ref{section:partialstrongcolourings}.
Then the pieces of the proof are put together in Section~\ref{section:mainproof}.

Throughout the proof, we will use the following notation.
Given~$a$,~$b$,~$c$ reals with $c > 0$,~$a = b \pm c$ means that~$b - c \leq a \leq b + c$.
We write~$x \ll y$ to mean that for all $y \in (0, 1]$ there exists $x_0 \in (0,1)$ such that for all $x \leq x_0$ the following statements hold.
Hierarchies with more constants are defined in a similar way and are to be read from the right to the left.

\section{Absorption} \label{section:absorption}

The aim of this section is to prove Lemma~\ref{lemma:absorbing}, the existence of an absorbing set.
First we need the following simple lemma.

\begin{lemma} \label{lemma:twotransversals}
	Let~$G$ be a graph and let~$\mathcal{P} = \{V_1, \dots, V_k\}$ be a partition of~$V(G)$ such that $|V_i| > 2 \Delta(G)$ for all $i \in [k-1]$.
	Then for any $v_k,v_k' \in V_k$, there exists an independent transversal~$T$ of~$\{V_1, \dots, V_{k-1}\}$ such that~$T \cup \{v_k\}$ and~$T \cup \{ v'_k \}$ are independent transversals of~$\mathcal{P}$. 
\end{lemma}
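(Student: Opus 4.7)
The plan is to reduce the statement to a Haxell-type independent transversal question and then adapt Haxell's proof. Set $N \dfn N_G(v_k) \cup N_G(v_k')$, so that $|N| \le d_G(v_k) + d_G(v_k') \le 2\Delta(G)$. The conditions ``$T \cup \{v_k\}$ and $T \cup \{v_k'\}$ are independent transversals of $\mathcal{P}$'' are equivalent to $T \cap N = \emptyset$; hence it suffices to find an independent transversal of the restricted partition $\mathcal{P}^{*} \dfn \{V_i \setminus N : i \in [k-1]\}$ in $G$.

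The most natural attempt is to apply Haxell's classical independent-transversal theorem to $\mathcal{P}^{*}$. Its hypothesis $|V_i \setminus N| \ge 2\Delta(G)$, however, does \emph{not} follow from $|V_i| > 2\Delta(G)$: one only obtains $|V_i \setminus N| \ge |V_i| - |N| \ge 1$, which is far too weak. This is the central difficulty.

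To bridge the gap, I would adapt Haxell's contradictive swap argument to this restricted setting. Assume for contradiction that no independent transversal of $\mathcal{P}^{*}$ exists in $G$, and take a maximum partial independent transversal $T^{*}$ of $\mathcal{P}^{*}$. Since $|T^{*}| < k-1$, some class $V_j$ is missed; by maximality every vertex of $V_j \setminus N$ is blocked by a neighbour in $T^{*}$. The idea is then to perform a Haxell-style sequence of swaps that replaces vertices of $T^{*}$ by new vertices drawn from $\bigcup_i (V_i \setminus N)$, eventually freeing a vertex of $V_j \setminus N$ to join $T^{*}$ and so contradicting maximality. The strict inequality $|V_i| > 2\Delta(G)$ supplies the extra vertex that Haxell's argument consumes, while the structural fact that $N$ is a union of only two neighbourhoods---encoded by the global bound $\sum_{i \in [k-1]} |V_i \cap N| \le |N| \le 2\Delta(G)$---ensures that on average classes have ample slack and can serve as swap targets.

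The main obstacle is the lack of uniform per-class slack: in the extreme case $V_i \setminus N$ may be a single vertex, leaving no room for local modifications in that class. Overcoming this requires careful bookkeeping to route swaps through classes where $|V_i \cap N|$ is small, rather than performing them locally. An attractive alternative would be to invoke a connectivity result for the independence complex in the spirit of~\cite{AharoniBergerZiv2007}, although translating such topological information into the desired integer transversal is itself non-trivial.
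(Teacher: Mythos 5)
Your reduction to finding an independent transversal of $\mathcal{P}^{*} = \{V_i \setminus N : i \in [k-1]\}$ is exactly the paper's first step, and you correctly diagnose why the ``each class has size $\ge 2\Delta$'' form of Haxell's theorem does not apply: a single class $V_i \setminus N$ could be reduced to one vertex. But at that point the proof stops; the swap argument you gesture at is not carried out, and ``careful bookkeeping to route swaps through classes where $|V_i \cap N|$ is small'' is a description of a difficulty, not a resolution of it. As written, the proposal does not constitute a proof.

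The missing ingredient is that Haxell's theorem has a stronger, Hall-type formulation (Theorem~3 of Haxell's 2001 paper, stated in the present paper as Lemma~\ref{lemma:haxellcondition}): one only needs
\[
\Bigl|\bigcup_{i \in I} V_i\Bigr| > (2|I|-2)\,\Delta(G) \qquad\text{for every } I \subseteq [k],
\]
rather than a pointwise bound $|V_i| \ge 2\Delta(G)$ on each class. You in fact observe the decisive inequality yourself---$\sum_{i\in[k-1]} |V_i \cap N| \le |N| \le 2\Delta(G)$---but do not notice that this is precisely what makes the union condition verifiable: for non-empty $I \subseteq [k-1]$,
\[
\Bigl|\bigcup_{i\in I} (V_i \setminus N)\Bigr| \;\ge\; (2\Delta(G)+1)|I| - |N| \;>\; (2|I|-2)\Delta(G) \;\ge\; (2|I|-2)\Delta(G'),
\]
because $N$ is subtracted once globally, not once per class. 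Invoking Lemma~\ref{lemma:haxellcondition} then finishes the proof immediately, with no need to re-develop Haxell's swap machinery. Your attempt would effectively re-prove that theorem from scratch, which is far more work than citing it and far from being completed here.
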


To prove Lemma~\ref{lemma:twotransversals} we will use the following result of Haxell~\cite{Haxell2001}, which was stated differently~\cite{Haxell1995} (see the discussion after Corollary 15 in~\cite{Haxell2016}).

\begin{lemma}[Haxell~{\cite[Theorem 3]{Haxell2001}}] \label{lemma:haxellcondition}
	Let~$G$ be a graph and let~$\mathcal{P} = \{ V_1, \dotsc, V_k \}$ be a partition of~$V(G)$.
	If each~$I \subseteq [k]$ satisfies $\left| \bigcup_{i \in I} V_i \right| > (2|I| - 2) \Delta(G)$, then there exists an independent transversal of~$\mathcal{P}$.
\end{lemma}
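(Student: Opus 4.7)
The plan is to reduce the problem to a direct application of Haxell's condition (Lemma~\ref{lemma:haxellcondition}). The idea is that once we forbid the neighbourhoods of $v_k$ and $v_k'$ from each class $V_i$ with $i \in [k-1]$, finding an independent transversal of what remains automatically gives a set $T$ that is compatible with both $v_k$ and $v_k'$.

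Concretely, I would set $V_i' \dfn V_i \setminus (N(v_k) \cup N(v_k'))$ for each $i \in [k-1]$, and consider the partition $\mathcal{P}' \dfn \{V_1', \dotsc, V_{k-1}'\}$ of $U \dfn V_1' \cup \dotsb \cup V_{k-1}'$ in the induced subgraph $G[U]$. Any independent transversal $T$ of $\mathcal{P}'$ in $G[U]$ is independent in $G$ and, by the definition of the $V_i'$, contains no neighbour of $v_k$ or $v_k'$; thus $T \cup \{v_k\}$ and $T \cup \{v_k'\}$ are both independent transversals of $\mathcal{P}$, as required.

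The remaining task is to verify the hypothesis of Lemma~\ref{lemma:haxellcondition} for $\mathcal{P}'$ in $G[U]$. Since $\Delta(G[U]) \leq \Delta(G)$, it suffices to show that $|\bigcup_{i \in I} V_i'| > (2|I| - 2) \Delta(G)$ for every nonempty $I \subseteq [k-1]$. Because $|N(v_k) \cup N(v_k')| \leq 2 \Delta(G)$, removing these vertices from $\bigcup_{i \in I} V_i$ costs at most $2\Delta(G)$ in total, independently of $|I|$. Hence
\[
    \Bigl| \bigcup_{i \in I} V_i' \Bigr| \ge \sum_{i \in I} |V_i| - 2 \Delta(G) > 2|I| \Delta(G) - 2 \Delta(G) = (2|I| - 2) \Delta(G),
\]
using $|V_i| > 2\Delta(G)$ for each $i \in I \subseteq [k-1]$. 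This inequality is strict even when $|I| = 1$, so Lemma~\ref{lemma:haxellcondition} yields the desired independent transversal $T$.

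I do not expect a real obstacle here: the whole lemma is a clean packaging of Haxell's theorem, and the only point worth flagging is that the loss from deleting \emph{both} $N(v_k)$ and $N(v_k')$ is bounded by $2\Delta(G)$ rather than by $2|I|\Delta(G)$, which is precisely what allows the slack between $|V_i| > 2\Delta(G)$ and the Haxell bound $(2|I|-2)\Delta(G)$ to absorb the two forbidden neighbourhoods simultaneously.
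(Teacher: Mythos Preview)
Your write-up is a clean and correct proof of Lemma~\ref{lemma:twotransversals}, and in fact it matches the paper's own proof of that lemma essentially line for line. However, the statement you were asked to prove is Lemma~\ref{lemma:haxellcondition} (Haxell's condition) itself, not Lemma~\ref{lemma:twotransversals}. Your argument \emph{invokes} Lemma~\ref{lemma:haxellcondition} as a black box in its final step, so as a proof of Lemma~\ref{lemma:haxellcondition} it is circular: you have shown how to deduce the two-transversal statement from Haxell's criterion, not how to establish Haxell's criterion.

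In the paper, Lemma~\ref{lemma:haxellcondition} is quoted from~\cite{Haxell2001} without proof, so there is no in-paper argument to compare against. A genuine proof would require the topological or ``lazy'' greedy machinery from Haxell's original papers (or one of the later combinatorial proofs), none of which appears in your proposal.
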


Now we prove Lemma~\ref{lemma:twotransversals}.

\begin{proof}[Proof of Lemma~\ref{lemma:twotransversals}]
	For every $i \in [k-1]$, let $V'_i \dfn V_i \setminus ( N(v_k) \cup N(v'_k ))$.	
	Let $\mathcal{P}' \dfn \{ V'_1, \dotsc, V'_{k-1} \}$ and $G' \dfn G[ \bigcup_{i \in [k-1]} V'_i ]$.
	Clearly it is enough to find an independent transversal of~$\mathcal{P}'$ in~$G'$.	
	For every non-empty $I \subseteq [k-1]$, we have that \begin{align*}
	\left| \bigcup_{i \in I} V'_i \right|
	& \ge (2 \Delta(G)  + 1) |I| - |N(v_k) \cup N(v'_k)| \\
	& \ge (2 \Delta(G) + 1) |I| - 2 \Delta(G) \\
	& > (2 |I| - 2) \Delta(G) \ge (2|I| - 2) \Delta(G').
	\end{align*} By Lemma \ref{lemma:haxellcondition},~$G'$ has an independent transversal of~$\mathcal{P}'$, as desired.
\end{proof}

By supersaturation, Lemma~\ref{lemma:twotransversals} implies the following corollary.
We omit its proof since it is quite standard (see, e.g.,~\cite[Section 2]{MubayiZhao2007}).

\begin{corollary} \label{cor:supersaturation}
	For all integers~$k \ge 3$ and~$\eps > 0$, there exists $\eta = \eta(k, \eps) > 0$ and $r_0 = r_0(k,\eps)$ such that the following holds for all~$r \ge r_0$:
	let~$G$ be a graph and let~$\mathcal{P} = \{ V_1, \dotsc, V_k \}$ be a partition of~$V(G)$ with classes of size $r \ge (2 + \eps) \Delta(G)$.
	Then for any two vertices $v_k, v'_k \in V_k$, there exist at least~$\eta r^{k-1}$ independent transversals~$T$ of~$\{V_1,\dotsc,V_{k-1}\}$ such that~$T \cup \{ v_k \}$ and~$T \cup \{ v'_k \}$ are independent transversals of~$\mathcal{P}$.
\end{corollary}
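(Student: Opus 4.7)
I would prove Corollary~\ref{cor:supersaturation} by a standard supersaturation argument built on Lemma~\ref{lemma:twotransversals}. The key observation is that the hypothesis $r \geq (2+\varepsilon)\Delta(G)$ leaves slack $\varepsilon\Delta(G)$ in each class above the threshold $2\Delta(G)$ required by Lemma~\ref{lemma:twotransversals}. Consequently, that lemma continues to apply after deleting any small subset from each class, and this flexibility is what supplies the ``supersaturation'' phenomenon.

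Concretely, fix $v_k, v_k' \in V_k$ and a parameter $s$ with $2\Delta(G) < s \leq r$ (e.g.\ $s = r - \lfloor \varepsilon \Delta(G)/2 \rfloor$). For each tuple $\mathbf{W} = (W_1, \dotsc, W_{k-1})$ of subsets with $W_i \subseteq V_i$ and $|W_i| = s$, apply Lemma~\ref{lemma:twotransversals} to the restricted partition $(W_1, \dotsc, W_{k-1}, V_k)$ with $v_k, v_k'$ to obtain a good transversal $T \subseteq \bigcup_i W_i$. Iterating this application, removing the found transversal from each $W_i$ before reapplying the lemma (which preserves its hypothesis while each $|W_i| > 2\Delta(G)$), produces $s - 2\Delta(G)$ pairwise disjoint good transversals inside $\mathbf{W}$. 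A double count of the pairs $(\mathbf{W}, T)$ with $T$ a good transversal contained in $\bigcup_i W_i$ then yields, via the identity $\binom{r}{s}^{k-1}/\binom{r-1}{s-1}^{k-1} = (r/s)^{k-1}$, a lower bound of the form $(s-2\Delta(G))(r/s)^{k-1}$ on the number of good transversals.

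The hard part will be tuning $s$ and refining the counting to reach the claimed bound $\eta r^{k-1}$: the naive estimate above is at best linear in $r$ in our regime $r = \Theta(\Delta(G))$, and so falls short of $\eta r^{k-1}$ by a factor of order $r^{k-2}$. Closing this gap follows the template of Section~2 of~\cite{MubayiZhao2007}: one applies Lemma~\ref{lemma:twotransversals} across tuples $\mathbf{W}$ in a coordinated way, exploiting the slack $\varepsilon\Delta(G)$ simultaneously in each of the $k-1$ classes (for instance by inducting on $k$, with the base case $k=3$ handled by combining iterated applications of Lemma~\ref{lemma:twotransversals} with a direct count of non-edges inside $V_1' \cup V_2'$, where $V_i' = V_i \setminus (N(v_k) \cup N(v_k'))$), so that a positive fraction of all $r^{k-1}$ transversals is captured rather than only a constant number. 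The required hierarchy $1/r_0 \ll \eta \ll \varepsilon, 1/k$ then emerges naturally.
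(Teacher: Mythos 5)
The paper itself omits the proof of Corollary~\ref{cor:supersaturation}, deferring to the ``standard'' supersaturation argument and citing \cite{MubayiZhao2007}; so there is no fixed target argument to compare against. Judged on its own terms, your proposal correctly sets up the double count and, crucially, correctly diagnoses its shortfall: iterating Lemma~\ref{lemma:twotransversals} inside each $(k-1)$-tuple of $s$-subsets yields at most $(s-2\Delta(G))(r/s)^{k-1}$ good transversals, and since Lemma~\ref{lemma:twotransversals} forces $s>2\Delta(G)=\Theta(r)$, the supersaturation parameter $s$ cannot be made constant (unlike in classical Erd\H{o}s--Simonovits supersaturation), so this quantity is only $\Theta(r)$ after optimising $s$, a factor $r^{k-2}$ short of $\eta r^{k-1}$. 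Recognising this is the main content of a correct sketch here.

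However, the step you offer to close that gap is not actually an argument, and the inductive route you suggest has a genuine obstruction that you do not address. The base case $k=3$ is indeed fine by a direct count: with $V_i' := V_i\setminus(N(v_k)\cup N(v_k'))$ and $a_i:=|V_i'|$, one has $a_1+a_2\ge 2r-2\Delta(G)$, whence the number of non-edges between $V_1'$ and $V_2'$ is at least $\min(a_1,a_2)\bigl(\max(a_1,a_2)-\Delta(G)\bigr)\ge (r-2\Delta(G))^2\ge \bigl(\tfrac{\eps}{2+\eps}\bigr)^2 r^2$. But for $k\ge 4$, the natural inductive step — fix $t_1\in V_1'$ and count good transversals of $\{V_2'\setminus N(t_1),\dotsc,V_{k-1}'\setminus N(t_1)\}$ — does not transparently reduce to the $(k-1)$-class statement: the sets $V_i'\setminus N(t_1)$ may have size as small as $r-3\Delta(G)$, which is negative when $\eps<1$, so the class-size hypothesis fails and one must instead work with the union-type Haxell condition, whose supersaturated form is precisely what is being proved. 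Similarly, ``apply Lemma~\ref{lemma:twotransversals} across tuples $\mathbf{W}$ in a coordinated way'' and ``following the template of \cite{MubayiZhao2007}'' are placeholders, not steps: the Mubayi--Zhao sampling argument also needs care here, since taking $q$-random subsets $W_i\subseteq V_i$ requires $q\Delta(G)\gg\log r$ for the maximum degree of the induced subgraph to concentrate, and with $\Delta(G)=\Theta(r)$ that constraint costs polylogarithmic factors against the target $1/q^{k-1}\approx r^{k-1}$. In short: your diagnosis of the weakness of the naive count is right and valuable, but the proposal as written leaves the crucial boosting step unproved and the suggested induction would not go through as stated; you need to supply an explicit argument (e.g.\ a more careful subsampling or a bespoke count adapted to the union-version of Haxell's condition) before this can be called a proof.
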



To continue, we need to recall the following versions of the Chernoff inequalities.

\begin{lemma}[Chernoff's inequalities (see, e.g., {\cite[Theorem 2.8]{JansonLuczakRucinski2000}})] \label{lemma:chernoff}
	Let~$X$ be a generalised binomial random variable, that is,~$X$ is the sum of independent Bernoulli random variables, possibly with different parameters.
	For every $0 < \lambda \leq \expectation[X]$, \begin{equation}
	\prob[ |X - \expectation[X]| > \lambda ] \leq 2 \exp \left( - \frac{\lambda^2}{4\expectation[X]} \right). \label{eq:chernoffsimple}
	\end{equation}
	Also, for every~$\lambda > 0$,
	\begin{equation} \prob[ X - \expectation[X] > \lambda ] \leq \exp \left( - \frac{\lambda^2}{2 (\expectation[X] + \lambda / 3)} \right). \label{eq:chernoffsumofhypergeometrics} \end{equation}
\end{lemma}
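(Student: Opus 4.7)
The plan is to apply the classical Cram\'er--Chernoff exponential-moment method. Writing $X = \sum_{i=1}^{m} X_i$ with the $X_i$ independent Bernoulli variables of parameters $p_i$, set $\mu \dfn \expectation[X] = \sum_{i} p_i$. For any $t > 0$, Markov's inequality applied to $e^{tX}$ together with the independence of the $X_i$ gives
\begin{equation*}
  \prob[X \ge \mu + \lambda] \le e^{-t(\mu + \lambda)} \expectation[e^{tX}] = e^{-t(\mu + \lambda)} \prod_{i} \bigl(1 + p_i(e^t - 1)\bigr) \le \exp\bigl(\mu(e^t - 1) - t(\mu + \lambda)\bigr),
\end{equation*}
where the final step uses $1 + y \le e^y$.

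The next step is to optimise in $t$. Taking $t = \ln(1 + \lambda/\mu)$ yields the sharp Chernoff bound $\prob[X \ge \mu + \lambda] \le \exp(-\mu\,h(\lambda/\mu))$, where $h(x) \dfn (1 + x)\ln(1 + x) - x$. To extract \eqref{eq:chernoffsumofhypergeometrics}, I would verify the elementary inequality $h(x) \ge x^2/(2(1 + x/3))$ for all $x \ge 0$. The cleanest route is to study $f(x) \dfn h(x) - x^2/(2(1 + x/3))$: one checks $f(0) = 0$ and $f'(0) = 0$, and shows $f''(x) \ge 0$ on $[0, \infty)$, which reduces to a polynomial inequality after clearing denominators.

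For the two-sided bound \eqref{eq:chernoffsimple}, the upper tail follows directly from \eqref{eq:chernoffsumofhypergeometrics}: since $\lambda \le \mu$ we have $\mu + \lambda/3 \le 4\mu/3$, and hence $\lambda^2/(2(\mu + \lambda/3)) \ge 3\lambda^2/(8\mu) \ge \lambda^2/(4\mu)$. For the lower tail I would repeat the exponential-moment argument with $t < 0$ (equivalently, apply it to $-X$); a parallel computation gives $\prob[X \le \mu - \lambda] \le \exp(-\mu\,g(\lambda/\mu))$ with $g(x) \dfn x + (1 - x)\ln(1 - x)$, and the elementary estimate $g(x) \ge x^2/2$ on $[0, 1]$ yields a bound of $\exp(-\lambda^2/(2\mu)) \le \exp(-\lambda^2/(4\mu))$. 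A union bound combines the two tails and accounts for the factor of $2$.

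The only non-routine ingredients are the two calculus estimates, $(1 + x)\ln(1 + x) - x \ge x^2/(2(1 + x/3))$ for $x \ge 0$ and $x + (1-x)\ln(1-x) \ge x^2/2$ for $x \in [0,1]$; both are standard and can be pushed through by a careful differentiation argument. I expect this real-analytic verification to be the main (and essentially only) obstacle in writing out the full proof, which is no doubt why the paper simply defers to~\cite{JansonLuczakRucinski2000} rather than reproducing the argument.
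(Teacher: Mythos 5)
Your proof is correct and follows the standard Cram\'er--Chernoff exponential-moment argument (Markov on $e^{tX}$, factor by independence, bound $1+y\le e^y$, optimise $t$, then replace the Bennett rate functions $h$ and $g$ by the elementary quadratic-over-linear lower bounds). The paper does not prove this lemma at all --- it is imported verbatim from Janson, \L{}uczak and Ruci\'nski, and that reference proves it in essentially the same way you outline --- so there is nothing in the paper to diverge from. One small reassurance on the only step you flagged as ``non-routine'': for $f(x)=h(x)-x^2/(2(1+x/3))$ one computes $f''(x)=\frac{1}{1+x}-\frac{27}{(3+x)^3}$, and $f''(x)\ge 0$ reduces to $(3+x)^3\ge 27(1+x)$, i.e.\ $9x^2+x^3\ge 0$, which is immediate for $x\ge 0$; the lower-tail estimate $g(x)\ge x^2/2$ follows similarly from $g''(x)=1/(1-x)\ge 1$ on $[0,1)$ together with $g(0)=g'(0)=0$.
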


Since every hypergeometric distribution is a sum of independent Bernoulli variables (see, e.g.,~\cite[Theorem 2.10]{JansonLuczakRucinski2000}), the inequalities \eqref{eq:chernoffsimple} and \eqref{eq:chernoffsumofhypergeometrics} also hold when~$X$ is a sum of independent hypergeometric variables.

We can now prove our absorbing lemma.
Given a graph~$G$ and a partition~$\mathcal{P}=\{ V_1,\dotsc,V_k\}$ of~$V(G)$, a subset~$S \subseteq V(G)$ is \emph{$\mathcal{P}$-balanced} (or just \emph{balanced}, if~$\mathcal{P}$ is clear from the context) if $|S \cap V_i| = |S \cap V_j|$ for every $i, j \in [k]$.

\begin{lemma}[Absorbing lemma] \label{lemma:absorbing}
	For all integers~$k \ge 3$ and~$\eps > 0$, there exist $0 < \beta \ll \alpha \ll \eps$ and $r_0 = r_0(\eps, k)$ such that the following holds for all~$r \ge r_0$.
	Let~$G$ be a graph and let~$\mathcal{P}=\{ V_1,\dotsc,V_k\}$ be a partition of~$V(G)$ with classes of size $r \ge (2 + \eps) \Delta(G)$.
	Then there exists a $\mathcal{P}$-balanced set~$A \subseteq V(G)$ of size at most~$\alpha n$ such that for every~$\mathcal{P}$-balanced set~$S \subseteq V(G)$ of size at most~$\beta n$,~$A \cup S$ can be partitioned into independent transversals of~$\mathcal{P}$.	
\end{lemma}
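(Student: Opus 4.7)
The plan is to apply the Rödl--Ruciński--Szemerédi absorption method. For each transversal $T$ of $\mathcal{P}$ I will design a local ``$T$-absorber'' — a balanced vertex set $B$ of fixed size admitting two different partitions into independent transversals, one covering $B$ and another covering $B \cup T$. A random sample of absorbers will then produce the desired $A$, after which any small balanced $S$ can be decomposed into transversals and absorbed greedily.

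Concretely, given a transversal $T = \{v_1, \dotsc, v_k\}$ of $\mathcal{P}$, I will declare a set $B$ of $k^2$ vertices (disjoint from $T$) to be a \emph{$T$-absorber} if it admits a decomposition $B = T^0 \cup T^{(1)} \cup \dotsb \cup T^{(k)}$ such that $T^0 = \{u_1, \dotsc, u_k\}$ is an independent transversal of $\mathcal{P}$ with $u_i \in V_i$, and, for each $i \in [k]$, $T^{(i)}$ is an independent transversal of $\{V_j : j \neq i\}$ with the additional property that both $T^{(i)} \cup \{v_i\}$ and $T^{(i)} \cup \{u_i\}$ are independent transversals of $\mathcal{P}$. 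Such a $B$ is $\mathcal{P}$-balanced (contributing $k$ vertices to every class), its ``default'' decomposition $\{ \{u_i\} \cup T^{(i)} : i \in [k]\}$ consists of $k$ independent transversals of $\mathcal{P}$, and its ``absorbing'' decomposition $\{T^0\} \cup \{\{v_i\} \cup T^{(i)} : i \in [k]\}$ partitions $B \cup T$ into $k+1$ independent transversals of $\mathcal{P}$. To count absorbers I iterate Corollary~\ref{cor:supersaturation}: there are $\Omega(r^k)$ choices of the independent transversal $T^0$ of $\mathcal{P}$ with $u_i \neq v_i$ for every $i$; and for each such $T^0$ and each $i$, Corollary~\ref{cor:supersaturation} applied to the pair $(v_i, u_i) \in V_i$ provides $\Omega(r^{k-1})$ candidate transversals $T^{(i)}$; the $O(k^2)$ pairwise disjointness constraints between these $k+1$ structures can be met greedily with a lower order loss. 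Therefore every $T$ has $\Omega(n^{k^2})$ absorbers.

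The random selection is standard. Let $\mathcal{F}$ be the family of all absorbers (over all $T$) and include each $B \in \mathcal{F}$ independently with probability $p = C^{-1} n^{1-k^2}$, for a sufficiently large constant $C = C(k, \eps)$. Denote the resulting subfamily by $\mathcal{F}'$ and the subfamily of $T$-absorbers in $\mathcal{F}'$ by $\mathcal{F}'_T$. By Lemma~\ref{lemma:chernoff} and a union bound over the at most $r^k \leq n^k$ transversals, with positive probability $\bigl|\bigcup \mathcal{F}'\bigr| \leq \alpha n/2$ and $|\mathcal{F}'_T| \geq \gamma n$ for every transversal $T$, where $\gamma = \gamma(k, \eps)$. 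The expected number of vertex-overlapping pairs within $\mathcal{F}'$ is $O\bigl((p|\mathcal{F}|)^2 k^4 / n\bigr) = O(n k^4 / C^2)$, which is much smaller than $\gamma n$ once $C$ is chosen large enough (in terms of $k, \eps$). Removing one absorber from each overlapping pair yields a vertex-disjoint subfamily $\mathcal{F}''$ still containing $\geq \gamma n/2$ absorbers for every $T$, and I set $A \dfn \bigcup \mathcal{F}''$.

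To verify the absorbing property: given any $\mathcal{P}$-balanced $S \subseteq V(G) \setminus A$ with $|S| \leq \beta n$, partition $S$ arbitrarily into $|S|/k \leq \beta n/k$ transversals $S_1, \dotsc, S_s$ and greedily pair each $S_j$ with an unused $S_j$-absorber in $\mathcal{F}''$; since there are at least $\gamma n/2$ absorbers available for each $S_j$ and only $\beta n/k$ demands in total, this succeeds whenever $\beta$ is small enough compared with $\gamma/k$, which is compatible with the hierarchy $\beta \ll \alpha \ll \eps$. Then $A \cup S$ decomposes into independent transversals by using the absorbing partition on each matched absorber and the default partition on each unused one. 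The main obstacle is the absorber design itself: the gadget must swallow \emph{every} transversal $T$, including ones with many internal edges, and this is what forces the $k^2$-vertex construction built from $k+1$ independent applications of Corollary~\ref{cor:supersaturation}; once that is in place, the concentration and overlap-deletion arguments proceed routinely.
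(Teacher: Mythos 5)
Your proposal is correct and follows essentially the same approach as the paper: the $k^2$-vertex absorber built from one independent transversal $T^0$ plus $k$ applications of Corollary~\ref{cor:supersaturation}, counted to $\Omega(r^{k^2})$, followed by a random sample of absorbers, deletion of overlapping pairs via Markov, and a greedy assignment of the partitioned leftover $S$ to surviving absorbers. The only differences are cosmetic (writing bounds in terms of $n$ instead of $r$, and parameterising the sampling probability by a large constant $C$ rather than a small $\gamma$).
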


\begin{proof}
	Let $1/r_0 \ll \gamma \ll \eta \ll 1/k, \eps$.
	Let~$m \dfn k^2$.
	Given a balanced $k$-subset~$S$ of~$V(G)$, an \emph{absorbing set~$A$ for~$S$} is a $m$-subset of~$V(G)$, disjoint from~$S$, such that both~$G[A]$ and~$G[A \cup S]$ can be partitioned into independent transversals of~$\mathcal{P}$.	
	For any balanced $k$-set~$S$, let~$\mathcal{L}(S)$ be the family of absorbing sets for~$S$.
	
	\begin{claim}
		For each balanced $k$-subset~$S$ of~$V(G)$, $|\mathcal{L}(S)| \ge \gamma \binom{r}{k}^k$.
	\end{claim}
	
	\begin{proofclaim}
		Let~$S = \{ s_1, \dotsc, s_k \}$ with $s_i \in V_i$ for every $i \in [k]$.
		
		A tuple~$(T, U_1, \dotsc, U_k)$ is \emph{good} if $S, T, U_1, \dotsc, U_k$ are pairwise disjoint, $T = \{ t_1, \dotsc, t_k \}$ is an independent transversal of~$\mathcal{P}$ and, for every $i \in [k]$, $t_i \in V_i$ and both $U_i \cup \{ s_i \}$ and $U_i \cup \{ t_i \}$ are independent transversals of~$\mathcal{P}$.
		Clearly, if~$(T, U_1, \dotsc, U_k)$ is good, then~$A = T \cup \bigcup_{i \in [k]} U_i$ is an absorbing set for~$S$.
		
		Let~$t_1$ be an arbitrary vertex of~$V_1 \setminus \{ s_1 \}$, for which we have at least $r-1 \ge r/2$ different possible choices.
		By Corollary~\ref{cor:supersaturation}, there exist~$\eta r^{k-1}$ independent transversals~$T'$ of~$\{ V_2, \dotsc, V_k \}$ such that both~$T' \cup \{ s_1 \}$ and~$T' \cup \{ t_1 \}$ are independent transversals of~$\mathcal{P}$.
		By ignoring those~$T'$ which have non-empty intersection with~$S$, we have at least $\eta r^{k-1}/2$ different possible choices for~$T'$.
		Set~$T = \{ t_1 \} \cup T'$.
		Repeating the same argument with~$s_i, t_i$ we can find~$\eta r^{k-1}/2$ choices for~$U_i$, for every $i \in [k]$.
		Therefore, there are at least~$(\eta^{k+1} / 2^{k+2}) r^m$ good tuples.
		Using~$\gamma \ll \eta$, we find those good tuples yield at least~$\gamma \binom{r}{k}^k$ different absorbing sets for~$S$, as desired.
	\end{proofclaim}
	
	Recall that $m = k^2$ and choose a family~$\mathcal{F}$ of balanced $m$-sets by including each one of the~$\binom{r}{k}^{k}$ balanced $m$-sets independently at random with probability \[ p \dfn \frac{\gamma r}{16 k^2 \binom{r}{k}^k}. \]
	By Chernoff's inequality~\eqref{eq:chernoffsimple}, with probability $1 - o(1)$ we have that \begin{equation} |\mathcal{F}| \leq \frac{\gamma r}{8 k^2}, \label{eq:absorbing1} \end{equation} and for every balanced $k$-set~$S$, \begin{equation} |\mathcal{L}(S) \cap \mathcal{F} | \ge \frac{\gamma^2 r}{32 k^2}. \label{eq:absorbing2} \end{equation}
	
	We say a pair~$(A_1, A_2)$ of $m$-sets is \emph{intersecting} if $A_1 \neq A_2$ and $A_1 \cap A_2 \neq \emptyset$.
	We say~$\mathcal{F}$ \emph{contains} a pair~$(A_1, A_2)$ if $A_1, A_2 \in \mathcal{F}$.
	The expected number of intersecting pairs contained in~$\mathcal{F}$ is at most \[ \binom{r}{k}^k k^2 \binom{r}{k-1} \binom{r}{k}^{k-1} p^2 \leq \frac{\gamma^2 r}{2^7 k^2}. \]
	By Markov's inequality, with probability at least~$1/2$ the number of intersecting pairs contained in~$\mathcal{F}$ is at most~$\gamma^2 r / (2^6 k^2)$.
	Therefore, with positive probability~$\mathcal{F}$ satisfies \eqref{eq:absorbing1} and \eqref{eq:absorbing2} and contains at most~$\gamma^2 r / (2^6 k^2)$ intersecting pairs.
	
	By removing one $m$-set of every intersecting pair in~$\mathcal{F}$, we obtain a family~$\mathcal{F}'$ of pairwise disjoint balanced $m$-sets such that for every balanced $k$-set~$S$, \[ |\mathcal{L}(S) \cap \mathcal{F}'| \ge \frac{\gamma^2 r}{32 k^2} - \frac{\gamma^2 r}{2^6 k^2} \ge \frac{\gamma^2 r}{64 k^2}. \]
	
	Let $A \dfn \bigcup_{F \in \mathcal{F}'} F$.
	Define $\alpha \dfn \gamma / (8 k^2)$ and $\beta \dfn \gamma^2 / 64 k^2$.
	Note that~$A$ has size at most $ k |\mathcal{F}'| \leq k |\mathcal{F}| \leq \alpha n$.
	For every balanced $S \subseteq V(G)$ of size at most~$\beta n$, we can partition it into at most $\beta r \leq \gamma^2 r / (64 k^2)$ balanced $k$-sets, so it is possible to assign a different absorbing $m$-set in~$\mathcal{F}'$ to each one of these sets.
	Hence,~$G[A \cup S]$ can be partitioned into independent transversals of~$\mathcal{P}$, as desired.
\end{proof}

\section{Partial strong colourings} \label{section:partialstrongcolourings}

Let~$G$ be a graph and $\mathcal{P} = \{ V_1, \dotsc, V_k \}$ a partition of~$V(G)$ with classes of size~$r$.
A \emph{$t$-partial strong colouring} of~$G$ with respect to~$\mathcal{P}$ is a collection of~$t$ disjoint independent transversals of~$\mathcal{P}$.
If $\schrom(G) = r$, then~$G$ has a $r$-partial strong colouring with respect to~$\mathcal{P}$.
The aim of this section is to show the existence of $(1 - \delta)r$-partial strong colourings of~$\mathcal{P}$.

\begin{lemma} \label{lemma:almostperfectcolouring}
	For all integers $k \ge 3$ and $\delta, \eps > 0$, there exists $r_0 = r_0(k, \delta, \eps)$ such that the following holds for all $r \ge r_0$:
	Let~$G$ be a graph and~$\mathcal{P}$ be a partition of~$V(G)$ with~$k$ classes of size $r \ge (2 + \eps) \Delta(G)$.
	Then there exists a $(1 - \delta) r$-partial strong colouring of~$G$ with respect to~$\mathcal{P}$. 
\end{lemma}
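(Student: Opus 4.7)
The plan is an iterative construction: extract the $(1 - \delta)r$ disjoint independent transversals $T_1, T_2, \ldots, T_{(1-\delta)r}$ one at a time. Writing $G_t$, $\mathcal{P}_t$ for the graph and partition obtained by deleting the first $t$ transversals (so $\mathcal{P}_t$ has $k$ classes of size $r - t$), the key tool at each step is Haxell's theorem (Lemma~\ref{lemma:haxellcondition}), which produces an independent transversal of $\mathcal{P}_t$ in $G_t$ whenever $r - t > 2(1 - 1/k) \Delta(G_t)$. Under a naive greedy choice, $\Delta(G_t)$ need not decrease at all, so this condition fails after only $O(\varepsilon r)$ iterations --- far short of $(1 - \delta)r$. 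The remedy is to choose each $T_{t+1}$ randomly so that $\Delta(G_t)$ decreases at rate roughly $(r - t)/r$.

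Specifically, I would sample $T_{t+1}$ via a procedure under which every surviving vertex is selected with probability $(1 + o(1))/(r - t)$. Then each neighbour (in a fixed class $V_j$) of a fixed vertex $v$ is removed at the same rate, and Chernoff (Lemma~\ref{lemma:chernoff}) applied to the total over $t$ rounds --- after arguing that the per-round indicators behave essentially like independent draws --- yields, with high probability,
\[
\Delta(G_t) \le (1 + o(1)) \, \Delta(G) \cdot \frac{r - t}{r}
\qquad \text{uniformly for all } t \le (1-\delta) r.
\]
Substituting into Haxell's condition gives $1 > (1 + o(1)) \cdot 2(1 - 1/k)/(2 + \varepsilon)$, which has constant slack (at least $\varepsilon/(2+\varepsilon)$) for every $k \ge 2$ and $\varepsilon > 0$, so the iteration runs to completion.

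The main obstacle is ensuring this marginal uniformity of the random transversal at each step. The hypergraph of independent transversals of $G_t$ is not vertex-regular in general --- Corollary~\ref{cor:supersaturation} gives only a lower bound of order $\eta r^{k-1}$ on its vertex degrees, not approximate regularity, so uniform sampling over transversals would bias towards vertices lying in disproportionately many of them. A clean workaround is to sample a random equitable partition via uniform independent bijections $\pi_i \colon V_i \cap V(G_t) \to [r - t]$ and take the slots $\{\pi_1^{-1}(s), \ldots, \pi_k^{-1}(s)\}$ as candidate transversals; Chernoff bounds the expected number of non-independent slots, and these can be repaired by Haxell-style local swaps using the $\varepsilon$-slack in the degree condition. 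The hardest part is balancing such repairs against the concentration bookkeeping so that the degree invariant above survives across all $(1 - \delta) r$ rounds; this is where Lemma~\ref{lemma:chernoff} together with careful union bounds over vertices and steps do the heavy lifting.
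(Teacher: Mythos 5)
Your proposal takes a genuinely different route from the paper, but it leaves the central difficulty unresolved.

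The paper proves this lemma not by iterated deletion but by a single application of Pippenger's semi-random-method theorem (Theorem~\ref{theorem:nibble}). Concretely, it builds a $k$-uniform hypergraph $H$ on $V(G)$ whose edges are independent transversals of $\mathcal{P}$, chosen so that $H$ is approximately $D$-regular with codegrees $o(D)$. The regularity is obtained by combining two ingredients: a random collection of balanced $mk$-sets $R(j)$ with controlled pair/triple incidences (Claim~\ref{claim:tediouscalculations}), and --- crucially --- the Aharoni--Berger--Ziv fractional strong colouring theorem, restated as Corollary~\ref{corollary:fractionalmatching}. The fractional perfect matching on transversals inside each $R(j)$ is exactly what makes the expected vertex degrees in $H$ equal (up to error) across all vertices, bypassing the irregularity you worry about. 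Small codegrees then come essentially for free from the fact that a $\mathcal{P}$-legal pair lies in at most two $R(j)$'s. After Chernoff, Pippenger's theorem delivers an almost-perfect matching in $H$, which is precisely the desired partial strong colouring. One shot, no iteration.

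Your iterative scheme has the right back-of-envelope arithmetic --- if $\Delta(G_t) \le (1+o(1))\Delta(G)(r-t)/r$ were maintained, Haxell's condition would indeed have constant slack throughout --- but the step that makes the whole thing work is exactly the step you flag as the ``hardest part'' and then leave open. The hypergraph of independent transversals of $G_t$ has no regularity guarantee (Corollary~\ref{cor:supersaturation} gives only a uniform lower bound on vertex degrees, not a two-sided one), so there is no obvious way to sample a transversal with near-uniform vertex marginals at each step. The random-bijection-plus-repair workaround introduces two further problems: the ``repairs'' can systematically bias which vertices get removed (the repair rule is deterministic and could favour low-degree or high-degree vertices in an uncontrolled way, compounding over $\Theta(r)$ rounds), and the round-to-round removal indicators are not independent, so the Chernoff application needs a genuine martingale or negative-correlation argument that you do not supply. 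In effect, your plan asserts precisely the regularity property that the paper obtains from the Aharoni--Berger--Ziv fractional theorem, without a mechanism to produce it. Without that, the proof does not close. If you want to salvage an iterative argument, the natural fix is to import the fractional matching here too: at each round, sample $T_{t+1}$ from an optimal fractional strong colouring of $G_t$, which \emph{does} have uniform vertex marginals $1/(r-t)$; but then you are reinventing the nibble, and it is cleaner to do it once globally as the paper does.
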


We need two extra ingredients to prove Lemma~\ref{lemma:almostperfectcolouring}.
The first ingredient will be a result in fractional matchings that will follow from Theorem~\ref{theorem:fractionalcolouring}.
Recall that a fractional colouring of a graph~$G$ is a function~$w$ that assigns weights in~$[0,1]$ to the independent sets of~$G$, with the condition that for every vertex $v \in V(G)$, $\sum_{I \ni v} w(I) \ge 1$.
The fractional chromatic number of~$G$ is the minimum of $\sum w(I)$ over all fractional colourings of~$G$, where the sum ranges over all independent sets of~$G$.
Note that if a graph~$G$ is fractionally strongly $r$-colourable, then for every partition~$\mathcal{P}$ of~$V(G)$ with classes of size~$r$, every optimal fractional colouring~$w$ for~$\mathcal{P}$ is supported precisely on independent transversals of~$\mathcal{P}$ and for every vertex $v \in V(G)$, $\sum_{I \ni v} w(I) = 1$.
Thus we have the following corollary of Theorem~\ref{theorem:fractionalcolouring}.

\begin{corollary} \label{corollary:fractionalmatching}
	Let~$G$ be a graph and~$\mathcal{P}$ a partition of~$V(G)$ with classes of size $r \ge 2 \Delta(G)$.
	Let~$\mathcal{T}$ be the set of all independent transversals of~$\mathcal{P}$.
	Then there exists $w: \mathcal{T} \rightarrow [0,1]$ such that $\sum_{T \ni v, T \in \mathcal{T}} w(T) = 1$ for all $v \in V(G)$.
\end{corollary}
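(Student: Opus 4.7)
The plan is to derive the corollary directly from Theorem~\ref{theorem:fractionalcolouring} via a short double-counting argument, essentially formalising the remark made just before the corollary statement. Let $n = kr = |V(G)|$. First, I would form the auxiliary graph $G^\ast$ obtained from $G$ by adding all edges inside each class $V_i$ of $\mathcal{P}$, so each $V_i$ becomes a copy of $K_r$. Since $G$ is fractionally strongly $r$-colourable by Theorem~\ref{theorem:fractionalcolouring}, the graph $G^\ast$ admits a weight function $w' \colon \mathcal{I}(G^\ast) \to [0,1]$ on its independent sets satisfying $\sum_{I} w'(I) \le r$ and $\sum_{I \ni v} w'(I) \ge 1$ for every $v \in V(G)$. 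Since $G^\ast$ contains $K_r$ as a subgraph its fractional chromatic number is at least~$r$, so in fact $\sum_I w'(I) = r$.

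Next, I would observe that the independent sets of $G^\ast$ are precisely the $\mathcal{P}$-legal subsets of $V(G)$ that are independent in $G$, so every $I$ in the support of $w'$ has $|I| \le k$, with equality exactly when $I$ is an independent transversal of $\mathcal{P}$. Double-counting vertex-set incidences gives
\[
n \;\le\; \sum_{v \in V(G)} \sum_{I \ni v} w'(I) \;=\; \sum_{I \in \mathcal{I}(G^\ast)} |I|\, w'(I) \;\le\; k \sum_{I} w'(I) \;=\; kr \;=\; n.
\]
Equality throughout forces $|I| = k$ whenever $w'(I) > 0$, so the support of $w'$ lies in $\mathcal{T}$, and $\sum_{I \ni v} w'(I) = 1$ for every $v \in V(G)$.

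Finally, restricting $w'$ to $\mathcal{T}$ yields the desired function $w$. There is no real obstacle here: the entire argument is just unwrapping the definition of fractional strong $r$-colourability and using the tightness of the double count to upgrade the inequalities $\sum_{I \ni v} w'(I) \ge 1$ and $|I| \le k$ into equalities simultaneously.
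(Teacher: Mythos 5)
Your proof is correct and formalises exactly the remark the paper makes immediately before stating the corollary: the paper observes (without written proof) that an optimal fractional colouring of the graph $G^\ast$ must be supported on independent transversals with $\sum_{I \ni v} w(I) = 1$ at every vertex, and your double-counting argument — using $\sum_I w'(I) = r$ from the $K_r$ lower bound, $|I| \le k$ from $\mathcal{P}$-legality, and tightness to upgrade both inequalities — is the standard way to justify that observation. So this is the same route as the paper, merely spelled out.
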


%

The second ingredient is a result that guarantees the existence of large matchings in uniform hypergraphs satisfying certain regularity conditions.
We use the following result of Pippenger~\cite{Pippenger} (see \cite[Theorem 1.1]{KahnKayll1997}).

\begin{theorem}[Pippenger \cite{Pippenger}] \label{theorem:nibble}
	For all integers $k \ge 2$ and $\delta \ge 0$, there exists $D_0 = D_0(k, \delta)$ and $\tau = \tau(k, \delta)$ such that the following is true for all $D \ge D_0$: if~$H$ is a $k$-uniform hypergraph on~$n$ vertices which satisfies
	\begin{enumerate}
		\item $\deg(v) = (1 \pm \tau ) D$ for all $v \in V(H)$, and
		\item $\deg(u,v) < \tau D$, for all distinct $u, v \in V(H)$, 
	\end{enumerate} then~$H$ contains a matching~$M$ covering all but at most $\delta n$ vertices.
\end{theorem}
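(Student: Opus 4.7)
The plan is to prove Pippenger's theorem via the \emph{R\"odl nibble} (semi-random) method: build a near-perfect matching iteratively by taking, at each of~$L$ rounds, a small random partial matching that covers a fixed fraction of the remaining vertices while preserving the almost-regularity and small-codegree properties of the surviving hypergraph. Fix an auxiliary parameter~$\xi = \xi(k,\delta) > 0$ small and an integer~$L = L(k,\delta)$ with~$e^{-\xi L} \leq \delta/2$, so that~$L = \Theta(\log(1/\delta)/\xi)$. Throughout the process I will maintain a sub-hypergraph~$H_i$ on a vertex set~$V_i$ of size~$n_i$ which is approximately~$D_i$-regular with codegrees bounded by~$\tau_i D_i$. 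Initially~$H_0 = H$, $n_0 = n$, $D_0 = D$, $\tau_0 = \tau$.

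The nibble at round~$i$ proceeds as follows. Include each edge of~$H_i$ independently in a random set~$E^*_i$ with probability~$p_i \dfn \xi/D_i$, and let~$M^*_i \subseteq E^*_i$ consist of those edges not sharing a vertex with any other edge of~$E^*_i$; by construction~$M^*_i$ is a matching in~$H_i$. Set~$V_{i+1} \dfn V_i \setminus V(M^*_i)$ and~$H_{i+1} \dfn H_i[V_{i+1}]$. For each vertex~$v \in V_i$, the expected number of chosen edges through~$v$ is~$\xi$, and the codegree hypothesis implies that distinct incidences behave approximately like independent events, giving~$\prob[v \in V_{i+1}] = (1 + o(1)) e^{-\xi}$. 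Analogously, for each edge~$e$ of~$H_i$, $\prob[e \subseteq V_{i+1}] = (1 + o(1)) e^{-k\xi}$, and hence~$\expectation[\deg_{H_{i+1}}(v) \mid v \in V_{i+1}] = (1 + o(1)) D_i e^{-(k-1)\xi}$, with codegrees of surviving pairs scaled by the same factor.

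These expected values must be turned into concentrated events simultaneously for all~$n_i$ vertices and~$\binom{n_i}{2}$ pairs. Each relevant statistic is a Lipschitz function of the independent edge-choices, changing by at most~$O(k)$ when the inclusion of a single edge is flipped, so the Azuma--Hoeffding inequality applied to the edge-exposure martingale yields deviation probabilities of order~$\exp(-c\xi^2 D_i/k^2)$; combined with a union bound over~$O(n_i^2)$ quantities and using that~$D_i$ is sufficiently large, all estimates hold simultaneously with high probability. Iterating, $D_i \approx D_0 \cdot e^{-i(k-1)\xi}$ and~$\tau_i \leq \tau_{i-1} + O(\sqrt{\xi})$, so~$\tau_i$ stays bounded by a suitable constant throughout, while~$n_i \approx n \cdot e^{-i\xi}$. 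After~$L$ rounds, $n_L \leq \delta n$ with high probability, and the matching~$M \dfn M^*_0 \cup \dotsb \cup M^*_{L-1}$ covers all but at most~$\delta n$ vertices of~$H$, as required.

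The main obstacle is the error compounding across rounds. Each nibble adds a fresh additive error of size roughly~$\sqrt{\xi}$ to the regularity parameter~$\tau_i$, so surviving~$L = \Theta(\log(1/\delta)/\xi)$ rounds requires the initial~$\tau$ to be chosen much smaller than~$\xi/L$; at the same time,~$D_0$ must dominate not only this additive error but also the exponential shrinkage, since~$D_i$ drops by a factor~$e^{-(k-1)\xi}$ per round and Azuma concentration fails once~$D_i$ falls below roughly~$\xi^{-2} \log n$. The correct ordering is to fix~$\xi = \xi(k,\delta)$ first, then~$\tau = \tau(k,\delta) \ll \xi/L$, and finally~$D_0 = D_0(k,\delta)$ large enough that the degree bound survives the exponential decay over all~$L$ rounds.
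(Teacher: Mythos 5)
The paper does not actually prove this statement --- it is imported as a black box from Pippenger's unpublished work (via Kahn--Kayll) --- so the only comparison is with the known nibble proofs, and your overall strategy (iterated random bites, tracking approximate regularity and small codegree of the surviving hypergraph) is indeed the standard route. However, as written your argument has a genuine gap at the concentration step. You require the degree and codegree estimates to hold \emph{simultaneously for all} $n_i$ vertices and all pairs, via Azuma plus a union bound, with failure probability of order $\exp(-c\xi^2 D_i/k^2)$ per quantity. A union bound over $\Omega(n)$ vertices then forces $D_i \gtrsim \xi^{-2}\log n$ --- you acknowledge this yourself --- but in the theorem $D_0$ depends only on $k$ and $\delta$, and $D$ may be a bounded constant while $n \to \infty$. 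So the ordering ``fix $\xi$, then $\tau$, then $D_0(k,\delta)$ large'' cannot close the argument: no choice of $D_0$ depending only on $(k,\delta)$ makes the union bound valid. The standard proofs avoid exactly this by using second-moment (Chebyshev-type) estimates, with the codegree hypothesis controlling the variances, and conclude only that \emph{most} vertices retain approximately the right degree; the few defective vertices are simply deleted and tolerated inside the final $\delta n$ uncovered vertices, rather than being controlled individually with high probability. In addition, the Lipschitz constant of your edge-exposure martingale is not $O(k)$: flipping one edge changes the isolation status of every chosen edge meeting it, and through the codegree this can shift a vertex degree by up to order $\tau D$, so even the single-vertex Azuma estimate needs more care than you give it.

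A second, smaller problem is the error bookkeeping. You allow $\tau_i \leq \tau_{i-1} + O(\sqrt{\xi})$ per round and run $L = \Theta(\log(1/\delta)/\xi)$ rounds, so the accumulated increment is $\Theta(\log(1/\delta)/\sqrt{\xi})$, which diverges as $\xi \to 0$; taking the initial $\tau \ll \xi/L$ does not help against an additive per-round loss that is independent of $\tau$. In the usual analysis the codegree bound does not degrade additively at all: codegrees can only decrease when passing to induced subhypergraphs, while degrees shrink by a factor $e^{-(k-1)\xi}$ per round, so the \emph{relative} codegree grows multiplicatively by $e^{(k-1)\xi}$ per round, i.e.\ by a factor polynomial in $1/\delta$ over all rounds, which is absorbed by choosing $\tau(k,\delta)$ sufficiently small. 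So the architecture of your sketch is the right one, but both the concentration mechanism and the parameter accounting must be replaced by the standard ``expectation, Chebyshev, discard bad vertices'' analysis before this becomes a proof.
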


We now prove Lemma~\ref{lemma:almostperfectcolouring}, whose proof is based on~\cite[Lemma 3.5]{LoMarkstrom2013}.

\begin{proof}[Proof of Lemma~\ref{lemma:almostperfectcolouring}]
	
	Fix $k \ge 3$ and $\delta, \eps > 0$.
	Without loss of generality, suppose $\eps \leq 1$.
	Choose~$r_0$ such that $1/r_0 \ll 1/k, \delta, \eps$.
	Now consider any $r \ge r_0$ and a graph~$G$ on $n = rk$ vertices with $r \ge (2 + \eps) \Delta(G)$.
	Fix a partition~$\mathcal{P}$ of~$V(G)$ with classes of size~$r$.
	Note that $k = |\mathcal{P}| \ge 3$.
	
	The idea is to define a $k$-uniform hypergraph~$H$ on the vertex set~$V(G)$ such that every edge corresponds to an independent transversal of~$\mathcal{P}$ and~$H$ also satisfies the conditions of Theorem~\ref{theorem:nibble}.
	A matching in~$H$ covering all but at most $\delta n$ vertices corresponds precisely to a $(1 - \delta) r$-partial strong colouring of~$\mathcal{P}$.
	
	Fix $\eta_1, \eta_2, \eta_3 \in (0,1)$ such that \begin{align*}
	\eta_1 + \eta_2 & > \eta_3, & 2 \eta_3 &> \eta_1 + \eta_2, \\
	2 \eta_1 + \eta_2 & < 1, & 1 & \leq 3 \eta_2 + 6 \eta_1.
	\end{align*} (For concreteness, $(\eta_1, \eta_2, \eta_3) = (0.1, 0.1, 0.175)$ works).
	Let $m \dfn r^{\eta_1}$.
	
	\begin{claim} \label{claim:tediouscalculations}		
		There exist $r^{1 + \eta_2}$ vertex sets~$R(1)$,$\dotsc$,$R(r^{1 + \eta_2})$ such that
		\begin{enumerate}
			\item \label{item:sizes} for every $j \in [r^{1 + \eta_2}]$,~$R(j)$ is a balanced $mk$-set,
			\item \label{item:singletons} every $v \in V(G)$ is in $r^{\eta_1 + \eta_2} \pm r^{\eta_3}$ many~$R(j)$,
			\item \label{item:pairs} every $\mathcal{P}$-legal $2$-set is in at most two~$R(j)$,
			\item \label{item:triples} every $\mathcal{P}$-legal $3$-set is in at most one~$R(j)$,
			\item \label{item:fractionalcondition} for every $j \in [r^{1 + \eta_2}]$, $m \ge 2 \Delta(G[R(j)]) $.
		\end{enumerate}
	\end{claim}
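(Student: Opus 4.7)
The plan is to construct the sets $R(1), \dots, R(r^{1+\eta_2})$ by independent random sampling: for each $j$, choose $R(j)$ uniformly at random among all balanced $mk$-sets (equivalently, sample $m$ vertices uniformly from each $V_i$), with the different $R(j)$'s mutually independent. Property (i) is then automatic, and we verify (ii)--(v) by first-moment and Chernoff estimates, finishing with a single union bound.

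For (ii), the number of $j$ with $v\in R(j)$ is a binomial random variable with parameters $r^{1+\eta_2}$ and $m/r = r^{\eta_1-1}$, so its mean is exactly $r^{\eta_1+\eta_2}$; Chernoff's inequality \eqref{eq:chernoffsimple} with $\lambda = r^{\eta_3}$ gives failure probability $\exp(-\Omega(r^{2\eta_3 - \eta_1 - \eta_2}))$, which beats $n$ by the condition $2\eta_3 > \eta_1+\eta_2$. For (iii) and (iv), a $\mathcal{P}$-legal $t$-set lies in a fixed $R(j)$ with probability $(m/r)^t = r^{t(\eta_1-1)}$ (since its members lie in distinct classes, hence are sampled independently); the expected number of pairs (resp.\ triples) of $j$'s containing a given $\mathcal{P}$-legal $2$-set (resp.\ $3$-set) is at most $r^{2(1+\eta_2)+3(\eta_1-1)\cdot 2}$ for (iv) and $r^{3(1+\eta_2)+2(\eta_1-1)\cdot 3}$ for (iii); summing Markov's inequality over the at most $n^2$ pairs and $n^3$ triples and using $2\eta_1+\eta_2 < 1$ and $6\eta_1+3\eta_2 < 1$ (the inequality the authors intend in the last displayed condition, as the stated direction appears inconsistent with their example) shows both (iii) and (iv) hold with probability $1-o(1)$.

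For (v), fix $j$ and a vertex $v\in V_i$, and condition on $v \in R(j)$. The degree of $v$ in $G[R(j)]$ is $\sum_{\ell \ne i} X_\ell$, where each $X_\ell$ is hypergeometric counting $|N(v)\cap V_\ell \cap R(j)|$ among a sample of size $m$ from $V_\ell$. The total expectation is $m|N(v)|/r \le m\Delta(G)/r \le m/(2+\eps)$, so Chernoff's inequality \eqref{eq:chernoffsumofhypergeometrics} (valid for sums of independent hypergeometrics as remarked after Lemma \ref{lemma:chernoff}) applied with a deviation of order $\eps m$ gives degree $< m/2$ except with probability $\exp(-\Omega(m)) = \exp(-\Omega(r^{\eta_1}))$. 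A union bound over the $r^{1+\eta_2}$ indices $j$ and the $n$ vertices $v$ absorbs a factor $r^{O(1)}$ and still goes to $0$.

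The combined failure probability from (ii)--(v) is $o(1)$ for $r \ge r_0$ large enough, so a realization satisfying all five properties exists. The main subtlety is purely bookkeeping: one must check that the chosen exponents $\eta_1,\eta_2,\eta_3$ make the Chernoff tail in (ii) dominate the number of vertices while keeping the first-moment arguments for (iii) and (iv) converging, and these four constraints are precisely why the list of inequalities on $(\eta_1,\eta_2,\eta_3)$ (with the obvious correction $6\eta_1+3\eta_2 < 1$) was set up in advance. No structural obstacle beyond that arises.
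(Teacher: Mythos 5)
Your proposal is correct and takes essentially the same route as the paper's Appendix~\ref{appendix:tediouscalculations}: sample each~$R(j)$ by choosing~$m$ vertices uniformly and independently from each class, use Chernoff for~\ref{item:singletons} (needing $2\eta_3 > \eta_1+\eta_2$) and for~\ref{item:fractionalcondition} via sums of hypergeometrics, and use first-moment/Markov bounds for~\ref{item:pairs} and~\ref{item:triples}, finishing with a union bound. You also correctly spotted that the displayed inequality $1 \leq 3\eta_2 + 6\eta_1$ is a sign typo for $3\eta_2 + 6\eta_1 < 1$, as the paper's own numerical example $(\eta_1,\eta_2)=(0.1,0.1)$ and the estimates \eqref{eq:z2}, \eqref{eq:z3} confirm.
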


	To prove Claim~\ref{claim:tediouscalculations}, note that properties \ref{item:sizes}--\ref{item:fractionalcondition} hold with high probability if each~$R(j)$ is a random balanced $mk$-set, chosen uniformly and independently.
	See Appendix~\ref{appendix:tediouscalculations} for the precise calculations.

	Let~$R(1)$,$\dotsc$,$R(r^{1 + \eta_2})$ be given by Claim~\ref{claim:tediouscalculations}.
	By \ref{item:fractionalcondition} and Corollary~\ref{corollary:fractionalmatching}, for each $j \in [r^{1 + \eta_2}]$ there exists a function~$w^j$ that assigns weights in~$[0,1]$ to the independent transversals of~$\mathcal{P}$ contained in~$G[R(j)]$, such that for every $v \in V(G[R(j)])$, $\sum_{T \ni v} w^j(T) = 1$.
	Now we construct a random $k$-uniform graph~$H$ on~$V(G)$ such that each independent transversal~$T$ of~$\mathcal{P}$ is randomly independently chosen as an edge of~$H$ with \[ \prob[ T \in H  ] = \begin{cases}
	w^{j_T}(T) & \text{if $T \subseteq G[R(j_T)]$ for some $j_T \in [r^{1 + \eta_2}]$,} \\
	0 & \text{otherwise.}
	\end{cases} \]
	Note that~$j_T$ is unique by~\ref{item:triples} as $k \ge 3$, so~$H$ is well-defined.
	For $v \in V(G)$, let $J_v = \{ j : v \in R(j) \}$ and so $|J_v| = r^{\eta_1 + \eta_2} \pm r^{\eta_3}$ by~\ref{item:singletons}.
	For every $v \in V(G)$, let~$E^j_v$ be the set of independent transversals in~$G[R(j)]$ containing~$v$.
	Thus, for $v \in V(G)$,~$\deg_{H}(v)$ is a generalised binomial random variable with expectation \[ \expectation[ \deg_{H}(v) ] = \sum_{j \in J_v} \sum_{T \in E^j_v} w^j(T) = |J_v| = r^{\eta_1 + \eta_2} \pm r^{\eta_3}. \]	
	Similarly, for every $\mathcal{P}$-legal $2$-set $\{u, v\}$,
	\[ \expectation[ \deg_{H}(u,v) ] = \sum_{j \in J_u \cap J_v} \sum_{T \in E^j_u \cap E^j_v} w^j(T) \leq |J_u \cap J_v| \leq 2 \] by \ref{item:pairs}.
	For every $2$-set~$\{u,v\}$ that is not $\mathcal{P}$-legal, $\deg_H(u,v) = 0$.
	Fix $\eta_6 \in (0,1)$ such that $\eta_1 + \eta_2 > \eta_6 > \eta_3$.
	By using Chernoff's inequality \eqref{eq:chernoffsimple}, we may assume that for every $v \in V(G)$ and every $2$-set $\{u,v\} \subseteq V(G)$, \[ \deg_{H}(v) = r^{\eta_1 + \eta_2} \pm r^{\eta_6}, \qquad \deg_{H}(u,v) < r^{\eta_1}. \]
	Thus~$H$ satisfies the hypothesis of Theorem~\ref{theorem:nibble} and the proof is completed.	
\end{proof}

\section{Proof of Theorem~\ref{theorem:main}} \label{section:mainproof}

\begin{proof}[Proof of Theorem~\ref{theorem:main}]
Let $1 / r_0 \ll \beta \ll \alpha \ll 1/k, \eps$ and consider a graph~$G$ on~$n$ vertices and a partition $\mathcal{P} = \{ V_1, \dotsc, V_k \}$ with classes of size $r \ge (2 + \eps) \Delta(G)$.
Note that $n = rk$.

By Lemma~\ref{lemma:absorbing}, there exists a balanced set~$A$ of size at most $\alpha n$ such that for every balanced set~$S$ of size at most $\beta n$, $G[A \cup S]$ can be partitioned into independent transversals of~$\mathcal{P}$.
Remove~$A$ from~$G$ to obtain a graph~$G''$, together with a partition $\mathcal{P}'' = \{ V''_1, \dotsc, V''_k \}$ obtained from $V''_i = V_i \setminus A$.
Note $\Delta(G'') \leq \Delta(G)$ and $r'' \dfn |V''_i| = (1 - \alpha) r$ and therefore, $r'' \ge (1 - \alpha) (2 + \eps) \Delta(G) \ge (2 + \eps/2) \Delta(G'')$.

By Lemma~\ref{lemma:almostperfectcolouring}, we obtain a $(1 - \beta)r''$-partial strong colouring of~$G''$ with respect to~$\mathcal{P}''$.
This gives a collection~$\mathcal{T}''$ of disjoint independent transversals of~$\mathcal{P}$ that covers every vertex of~$G''$ except for a set~$S$ of size at most $\beta r'' \leq \beta r$.
Then $G[A \cup S]$ can be covered by a collection~$\mathcal{T}$ of disjoint independent transversals of~$\mathcal{P}$.
Therefore, $\mathcal{T} \cup \mathcal{T''}$ is a spanning collection of disjoint independent transversals of~$\mathcal{P}$, as desired.
\end{proof}

\subsection*{Acknowledgements}
We thank Klas Markström for introducing the problem and Penny Haxell for her valuable comments and insightful discussions.

\bibliography{strongchromatic}

\appendix

\section{Proof of Claim~\ref{claim:tediouscalculations}} \label{appendix:tediouscalculations}

\begin{proof}
Fix $\eta_4, \eta_5 \in (0,1)$ such that
\begin{align*}
2 \eta_3 > \eta_1 + \eta_2 + \eta_4, \\
1 \leq 3 \eta_2 + 6 \eta_1 + \eta_5.
\end{align*} (For concreteness, if $(\eta_1, \eta_2, \eta_3) = (0.1, 0.1, 0.175)$, then $(\eta_4, \eta_5) = (0.15, 0.1)$ works).
Recall that $m = r^{\eta_1}$.
Let $p \dfn r^{-1 + \eta_1}$ and note that $m = pr$.
For every $i \in [k]$ and $j \in [r^{1 + \eta_2}]$, choose~$R^j_i$ to be a subset of~$V_i$ of size~$m$, chosen independently and uniformly at random.
Let $R(j) \dfn \bigcup_{i \in [k]} R^j_i$.
Clearly, \ref{item:sizes} holds.
Now we show that each of \ref{item:singletons}--\ref{item:fractionalcondition} holds with high probability, and so the desired $R(1), \dotsc, R(r^{1 + \eta_2})$ exist.

Consider $j \in [r^{1 + \eta_2}]$.
Consider an arbitrary $x \in V(G)$ and let $d \dfn \deg_G(x)$ and $d_i \dfn |N(x) \cap V_i| $ for every $i \in [t]$.
Then $d_1 + \dotsc + d_k = d \leq \Delta(G) \leq r/(2 + \eps)$.
Let $D \dfn \deg_{G[R(j)]}(x)$ and $D_i \dfn |N(x) \cap R^j_i|$ for all $i \in [k]$.
Note that $D = D_1 + \dotsc + D_k$, where the~$D_i$ are independent hypergeometric random variables with parameters $m, d_i, r$ each.
Then $\expectation[D] = pd$, and using Lemma~\ref{lemma:chernoff} \eqref{eq:chernoffsumofhypergeometrics}, we get
\begin{align*}
\prob[ 2 D > m ]
& = \prob\left[ D - \expectation[D] > \frac{m}{2} - pd \right]
\leq \prob\left[ D - \expectation[D] > \frac{pr \eps}{6} \right] \\
& \leq \exp\left( - \frac{p^2 r^2 \eps^2}{6 ( 12 pd + 4 p r \eps ) } \right) 
 \leq \exp( - \Omega(r^{\eta_1})).
\end{align*}
Hence, with probability $1 - \exp(- \Omega(r^{\eta_1}))$ we have \[ |R^j_i| = m \ge 2 \Delta(G[R]) \text{ for all } i \in [k], \]
that is, \ref{item:fractionalcondition} holds with high probability.

Now we check \ref{item:singletons}--\ref{item:triples}.
Note that for every $x \in V(G)$ and every $j \in [r^{1 + \eta_2}]$, $\prob[x \in R(j)] = p$.
For a $\mathcal{P}$-legal subset $S \subseteq V(G)$, let \[ Y_S \dfn |\{ j : S \subseteq R(j) \}|. \]
Since the probability that a particular $R_i \subseteq V_i$ intersects~$S$ is~$p$, \[ \prob[ S \subseteq R(j) ] = p^{|S|}, \] and therefore $\expectation[ Y_s ] = r^{1 + \eta_2} p^{|S|} = r^{1 + \eta_2 - (1 - \eta_1) |S|}$.
By Lemma~\ref{lemma:chernoff}, with probability at least $1 - \exp( - \Omega(r^{\eta_4}) )$, $Y_v = r^{\eta_1 + \eta_2} \pm r^{\eta_3}$ for every $v \in V(G)$, implying \ref{item:singletons} holds with high probability.

Let $Z_2 \dfn | \{ S \in \binom{V(G)}{2}, S \text{ is $\mathcal{P}$-legal} : Y_S \ge 3 \} |$, and observe that \begin{equation} \expectation(Z_2) < \binom{k}{2} r^2 (r^{1 + \eta_2})^3 p^6 \leq k^2 r^{-1 + 3 \eta_2 + 6 \eta_1} \leq r^{- \eta_5}. \label{eq:z2} \end{equation}
Let $Z_3 \dfn | \{ S \in \binom{V(G)}{3} , S \text{ is $\mathcal{P}$-legal} : Y_S \ge 2 \} |$, and observe that \begin{equation} \expectation(Z_3) < \binom{k}{3} r^3 (r^{1 + \eta_2})^2 p^6 \leq k^3 r^{-1 + 2 \eta_2 + 6 \eta_1} \leq r^{-\eta_5}. \label{eq:z3} \end{equation}
Together with Markov's inequality, \eqref{eq:z2} and \eqref{eq:z3} imply that \ref{item:pairs} and \ref{item:triples} hold with high probability, respectively.
\end{proof}

\end{document}